\newcommand{\real}{{\rm I\!R}}
\newtheorem{theorem}{Theorem}[section]
\newtheorem{proposition}[theorem]{Proposition}
\newtheorem{corollary}[theorem]{Corollary}
\newtheorem{lemma}[theorem]{Lemma}
\title{Bounds on $A_\alpha-$eigenvalues using graph invariants}
\author{
    João Domingos Gomes da Silva Junior\orcidlink{https://orcid.org/0000-0002-1745-0302}\\
    Mathematics Department\\
    Colégio Pedro II\\
    Campo de São Cristóvão 177, Rio de Janeiro, Brazil \\
  \texttt{joao.dgomes@gmail.com} \\
  %% examples of more authors
   \And
    Carla Silva Oliveira\orcidlink{https://orcid.org/0000-0001-6684-8811} \\
    Mathematics Department\\
    ENCE/IBGE\\
    Rua André Cavalcanti 106, Rio de Janeiro, Brazil \\
  \texttt{carla.oliveira@ibge.gov.br} \\
  \And
    Liliana Manuela Gaspar Cerveira da Costa\orcidlink{https://orcid.org/0000-0002-5258-1447} \\
    Mathematics Department\\
    Colégio Pedro II\\
    Campo de São Cristóvão 177, Rio de Janeiro, Brazil \\
  \texttt{lmgccosta@gmail.com} \\
  %% \AND
  %% Coauthor \\
  %% Affiliation \\
  %% Address \\
  %% \texttt{email} \\
  %% \And
  %% Coauthor \\
  %% Affiliation \\
  %% Address \\
  %% \texttt{email} \\
  %% \And
  %% Coauthor \\
  %% Affiliation \\
  %% Address \\
  %% \texttt{email} \\
}
\begin{document}
\maketitle
\begin{abstract}
In 2017, Nikiforov introduced the concept of the $A_{\alpha}-$matrix, as a linear convex combination of the adjacency matrix and the degree diagonal matrix of a graph. This matrix has attracted increasing attention in recent years, as it serves as a unifying structure that combines the adjacency matrix and the signless Laplacian matrix. In this paper, we present some bounds for the largest and smallest eigenvalue of $A_{\alpha}-$matrix involving invariants associated to graphs.
\end{abstract}

% keywords can be removed
\keywords{$A_{\alpha}-$matrix \and $A_{\alpha}-$eigenvalues \and graph invariants}

\section{Introduction}
\label{sec:introduction}

Spectral Graph Theory (SGT) is an essential field in discrete mathematics that studies the properties of graphs, focusing on the eigenvalues and eigenvectors of the matrices associated to graphs, such as the adjacency matrix, the Laplacian matrix and the signless Laplacian matrix among others. These matrices provide powerful tools for analyzing graphs and their applications in various domains.

Nikiforov introduced in \cite{VN17} a family of matrices, $A_\alpha-$matrix, obtained through the convex linear combination the adjacency matrix and the degree matrix of a graph. This matrix is at the state of the art in SGT and many researchers have therefore endeavoured to study it. Among the various problems studied involving the $A_\alpha-$matrix, we can point out: obtaining the smallest value of $\alpha$ for which $A_\alpha$ is positive semidefinite (\cite{BRONDANI2020, VN17, NIKIFOROV2017156}), obtaining the $A_\alpha-$spectrum for certain families of graphs (\cite{ BRONDANI2019209, VN17,  MUHAMMAD2020}), determining the $A_\alpha-$spectrum for the graph resulting from operations between other graphs (\cite{CHEN2019343, Li2019TheS, LIN2018210, VN17}), searching for graphs determined by their $A_\alpha-$spectrum (\cite{CHEN2019343, Lin2017GraphsDB, LIU2018274, Tahir2018}), obtaining lower and upper bounds for the eigenvalues of the $A_\alpha-$matrix (\cite{daSilvaJúniorOliveiradaCosta+2024, LIN2018430, LIN2018210, LIU2020111917, LIU2020347, SP2022, joao2023, WANG2020210}) and, in addition to these, obtaining bounds for the $A_\alpha-$eigenvalues of line graphs (\cite{daSilvaJúniorOliveiradaCosta+2024}).

As we can see, combinations of matrices are therefore crucial in SGT, boosting understanding of the properties of graphs and accelerating algorithmic advances in various applications. They allow researchers to explore the complex structures and dynamics that underlie networks, promoting a deeper understanding of graphs and their properties. Thus, the importance of matrix combinations in SGT remains evident, providing essential tools to unlock the full potential of analyzing and applying graphs.

This paper introduces bounds for the largest and smallest eigenvalues of the $A_\alpha-$matrix that use types of invariants of graphs. The structure of the paper is as follows: in Section~\ref{sec::prelim}, some basic definitions and fundamental results on Linear Algebra, Graph Theory, SGT and about the $A_\alpha-$matrix are provided. In Section~\ref{sec::newbounds}, some new bounds on $A_\alpha-$eigenvalues are introduced.

\section{Preliminaries}\label{sec::prelim}

In this section, we present the fundamental definitions and results of Linear Algebra, Graph Theory and SGT for a better comprehension of the article.

Let $x \in \real^n$, we denote by $\vert x \vert$ the Euclidean norm of $x$. Let $M$ be a $n \times n$ matrix. If $M$ is symmetric, the $M-$eigenvalues are real and we shall index them in non-increasing order, represented by $\lambda_1(M) \geq \lambda_2(M) \geq \dots \geq \lambda_n(M)$. The collection of $M-$eigenvalues together with their multiplicities is called the $M-$spectrum, denoted by $\sigma(M)$. Let \( P \) be a set, and let \( \pi = \{P_1, \ldots, P_m\} \) be a partition of \( P \). Suppose the matrix \( M \) has its rows and columns indexed by the elements of \( P \). The partition of \( M \) induced by \( \pi \) is called an equitable partition if, for each submatrix \( M_{ij} \) formed by the rows indexed by \( P_i \) and the columns indexed by \( P_j \), the row sums of \( M_{ij} \) are constant and equal to \( q_{ij} \). Additionally, the matrix \( N = (q_{ij})_{1 \leq i,j \leq m} \) is referred to as the quotient matrix of \( M \) with respect to \( \pi \). Theorem \ref{EquitPart} relates the $M-$eigenvalues with the $N-$eigenvalues and Theorem~\ref{theo::equitable_eigenvalues_relation} introduces a relation between its largest eigenvalue.

\begin{theorem} \label{EquitPart}
    \cite{ADB_Equitable} Let $M$ be a square matrix of order $n$ and suppose that $M$ has an equitable partition $\pi = \{P_1,P_2,\ldots,P_k\}$. Let $N$ be the quotient matrix of $M$ with respect to the partition $\pi$. Then the eigenvalues of $N$ are eigenvalues of $M$.
\end{theorem}

\begin{theorem} \label{theo::equitable_eigenvalues_relation}
    \cite{YOU201921} Let \( N \) be the equitable quotient matrix of \( M \) with respect to a partition. If \( N \) is a nonnegative matrix and \( M \) is irreducible. Then
    \[
        \lambda_1(M) = \lambda_1(N).
    \]
\end{theorem}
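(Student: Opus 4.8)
The plan is to combine Theorem~\ref{EquitPart} with the Perron--Frobenius theory of nonnegative matrices. First I would encode the equitable partition $\pi=\{P_1,\dots,P_m\}$ through its characteristic matrix $S\in\{0,1\}^{n\times m}$, defined by $S_{vj}=1$ exactly when $v\in P_j$. Unwinding the definition of an equitable partition --- the row sums of each block $M_{ij}$ are constant and equal to $q_{ij}$ --- gives the intertwining relation $MS=SN$, since for $v\in P_i$ one has $(MS)_{vj}=\sum_{w\in P_j}M_{vw}=q_{ij}=N_{ij}=(SN)_{vj}$. This identity is the engine of the whole argument, and since $S$ has full column rank it transports eigenvectors of $N$ to nonzero eigenvectors of $M$.

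The result then follows from two opposite inequalities. For $\lambda_1(N)\le\lambda_1(M)$, I would simply invoke Theorem~\ref{EquitPart}: every eigenvalue of $N$ is an eigenvalue of $M$, so in particular the largest eigenvalue of $N$ lies in $\sigma(M)$ and cannot exceed $\lambda_1(M)$. For the reverse inequality I would use that $N$ is nonnegative: by Perron--Frobenius its spectral radius $\rho(N)=\lambda_1(N)$ is an eigenvalue admitting a nonnegative eigenvector $y\ge 0$ with $y\neq 0$. Applying $MS=SN$ yields $M(Sy)=S(Ny)=\lambda_1(N)\,Sy$, and because $S$ has nonnegative entries and full column rank, the vector $Sy$ is nonnegative and nonzero. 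Thus $\lambda_1(N)$ is an eigenvalue of $M$ possessing a nonnegative eigenvector.

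The decisive step is the uniqueness clause of Perron--Frobenius for irreducible matrices: an irreducible nonnegative matrix $M$ admits a nonnegative eigenvector for one and only one eigenvalue, namely its spectral radius $\rho(M)=\lambda_1(M)$, with the corresponding eigenvector being in fact strictly positive. Since $Sy$ is a nonnegative eigenvector of $M$, this forces $\lambda_1(N)=\rho(M)=\lambda_1(M)$, completing the proof. The main obstacle I anticipate is justifying the hypotheses needed to apply Perron--Frobenius cleanly --- namely that $M$ inherits nonnegativity from the setting, so that $\lambda_1(M)=\rho(M)$, and that $\lambda_1(N)$ may legitimately be identified with $\rho(N)$ even when $N$ is not symmetric --- together with verifying that $Sy$ does not collapse to the zero vector, which is precisely where the full column rank of $S$ is essential.
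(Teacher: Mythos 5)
Your proof is correct: the paper states this theorem as a cited result from \cite{YOU201921} without reproducing a proof, and your argument --- encoding the partition by its characteristic matrix \(S\), deriving the intertwining relation \(MS = SN\), lifting the Perron vector of \(N\) to a nonnegative eigenvector \(Sy\) of \(M\), and invoking the uniqueness clause of Perron--Frobenius for irreducible matrices --- is essentially the standard proof given in that reference. You are also right to flag the one genuine subtlety: as stated, the theorem omits the hypothesis that \(M\) is nonnegative (irreducibility alone does not license Perron--Frobenius), an assumption that is implicit in the paper since the result is only applied to \(A_\alpha\)-matrices, which are nonnegative, symmetric, and irreducible for connected graphs, so that \(\lambda_1(M)=\rho(M)\) as your argument requires.
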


A finite graph is a pair of objects consisting of a nonempty finite set $V=V(G)$ and a subset (possibly empty) $E = E(G)$ of unordered pairs of elements of $V$. Usually written as $G = (V, E)$, the elements of $V$ are the vertices of $G$ and the elements of $E$ are its edges. If $u$ and $v$ denote vertices of $G$, most of the time we use $uv$ instead of $\{u, v\}$ to denote the edge incident to $u$ and $v$. The cardinalities of the sets $V$ and $E$ are denoted, in the proper order, by $n = |V|$ and $m = |E|$, where the former indicates the order of the graph and the latter its size. A graph $G$ is simple when it has at most one edge between any two vertices and no edge starts and ends at the same vertex. In other words, a simple graph is a graph without loops and multiple edges.

If $v_iv_j \in E$, we say that $v_i$ is adjacent to (or neighbor of)  $v_j$ (or $v_j$ is adjacent to $v_i$) and denote it by $v_i \sim v_j$, otherwise, $v_i \nsim v_j$. The set of vertices adjacent to $v_i$ is denoted by $N(v_i)$. The degree of $v \in V$, denoted by $d(v)$, is the number of neighbors of $v$. The minimum degree of $G$ is $\delta(G) = \min \{ d(v)| v \in V \}$ and the maximum degree of $G$ is $\Delta(G) = \max\{d(v)|  v \in V\}$. For short, we use $d_i$, $\Delta$, and $\delta$ to represent $d(v_i)$, $\Delta(G)$, and $\delta(G)$, respectively. Without loss of generality, we assume that the vertices are labeled so that $\Delta = d_1 \geq d_2 \geq \dots \geq d_n = \delta$ and the degree sequence is defined by $d(G) = (d_1, \ldots, d_n)$. The \textit{mean degree} of \( G \) is defined by \( \bar{d} = \dfrac{2m}{n}.\) The second maximum degree of $G$ is defined by $\Delta_2 = d_i \leq \Delta,$ for some $1 \leq i \leq n.$ A graph \( G \) is said to be \( r \)-regular if every vertex in \( G \) has degree \( r \). If \( r = n - 1 \), the graph, denoted by \( K_n \), is called \textit{complete graph}. A graph \( G \) is called semiregular if and only if its degree sequence contains exactly two distinct values, and irregular if and only if its degree sequence contains at least two distinct values. Below, in Figure~\ref{fig:main}, we present examples of graphs illustrating the concepts related to the parameters $\Delta$, $d_2$ and $\Delta_2$.

\begin{figure}[H]
    \centering
    % Subfigure 1
    \begin{subfigure}{0.3\textwidth}
        \centering
        \includegraphics[width=0.4\linewidth]{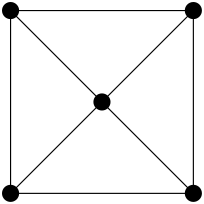}
        \caption{}
        \label{fig:sub1}
    \end{subfigure}
    %\hspace{0.05\textwidth} % Horizontal space between subfigures
    % Subfigure 2
    \begin{subfigure}{0.3\textwidth}
        \centering
        \includegraphics[width=0.4\linewidth]{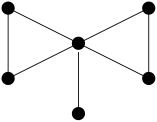}
        \caption{}
        \label{fig:sub2}
    \end{subfigure}
    %\hspace{0.05\textwidth}
    % Subfigure 3
    \begin{subfigure}{0.3\textwidth}
        \centering
        \includegraphics[width=0.6\linewidth]{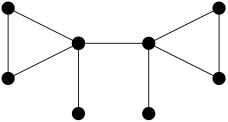}
        \caption{}
        \label{fig:sub3}
    \end{subfigure}
    \caption{Examples of graphs to illustrate the concepts of $\Delta$, $d_2$ and $\Delta_2$}
    \label{fig:main}
\end{figure}

In the graph shown in Figure~\ref{fig:sub1} is semiregular and its degree sequence is $(4, 3, 3, 3, 3)$, where $\Delta = 4$ and $d_2 = \Delta_2 = \delta = 3$. In Figure~\ref{fig:sub2}, the graph is irregular and its degree sequence is $(5, 2, 2, 2, 2, 1)$, with $\Delta = 5$ and $d_2 = \Delta_2 = 2$. Finally, in Figure~\ref{fig:sub3}, the graph is either irregular with degree sequence equal to $(4, 4, 2, 2, 2, 2, 1, 1)$, where $\Delta = d_2 = 4$ and $\Delta_2 = 2$. It is worth noting that if $\Delta_2 = \Delta,\ G$ is a $\Delta-$regular graph.

A graph $G$ is \textit{bipartite} if its vertex set $V$ can be partitioned into two nonempty subsets $V_1$ and $V_2$  (i.e., $V_1\cup V_2=V$ and $V_1\cap V_2=\varnothing$) such that each edge of $G$ has one endpoint in $V_1$ and one endpoint in $V_2$. The \textit{complete bipartite graph} of order $n = n_1 + n_2$ is a graph whose vertices in each partition are neighbors of all the vertices in the other partition. This graph is denoted by $K_{n_1,n_2}$ and, in particular, the \textit{star graph} is denoted by $K_{1,n-1}$. Furthermore, if all vertices in \( V_1 \) have degree \( r \), and all vertices in \( V_2 \) have degree \( s \), \( G \) is called a \((r, s)-\)semiregular bipartite graph, and if \( r = s \), \( G \) reduces to an \( r-\)regular bipartite graph.

A \textit{clique}, $C$, in $G$ is a subset of $V(G)$ such that every two distinct vertices are adjacent. This is equivalent to the condition that the induced subgraph of $G$ induced by $C$ is a complete graph. A maximum clique of $G$ is a clique, such that there is no clique with more vertices. Moreover, the \textit{clique number} of $G$, $\omega(G) = \omega$, is the number of vertices in a maximum clique in $G$. A \textit{walk} of length $k$ in $G$, from $v_i$ to $v_j$, is a sequence of $k+1$ of its vertices $v_i = u_0, u_1, u_2, \ldots, u_k = v_j$, such that $u_{i-1}u_i \in E$, $1 \leq i \leq k$. A path is a walk in which all vertices are distinct. We denote the \textit{path} with $n$ vertices by $P_n$. A graph $G$ is connected if $G$ is an isolated vertex or for every pair of its vertices, there is a path connecting them.
Let $u$ and $w$ be vertices of a connected graph $G$, the distance between $u$ and $w$, $d(u, w)$, is the length of the shortest path in $G$ from $u$ to $w$. The diameter of $G$, denoted by $\text{diam}(G)$, is the greatest distance between two vertices of $G$, i.e., $\text{diam}(G) = \displaystyle \max_{u,w \in V(G)}d(u,w)$. 

The adjacency matrix of $G$,  $A = A(G) = [a_{ij}]$, is a square and symmetric matrix of order $n$, such that $a_{ij} = 1$ if $v_i \sim v_j$ and $a_{ij} = 0$ otherwise. Their eigenvalues are denoted by $\lambda_i(A(G))$, for $1 \leq i \leq n$. The degree matrix of $G$, denoted by $D(G)$, is the diagonal matrix that has the degree of vertex $v_i$, $d(v_i)$, in the $i$-th position. The signless Laplacian matrix is defined by $Q(G) = D(G) + A(G)$ whose eigenvalues are denoted by $\lambda_i(Q(G))$, for $1 \leq i \leq n$. An interesting problem in SGT is to obtain bounds for $A$-eigenvalues and $Q$-eigenvalues involving invariants associated with them. Theorem~\ref{theo::bounds_adjacency} introduces bounds for $\lambda_1(A(G)).$

\begin{theorem}\label{theo::bounds_adjacency}
(\cite{Cvetkovic1990}) Let \( \delta, \bar{d}, \Delta \) be the minimal, mean, and maximal values of the vertex degrees in a connected graph \( G,\) respectively. If \( \lambda_1(A(G)) \) is the largest eigenvalue of the adjacency matrix of \( G \), then
\[
\delta \leq \bar{d} \leq \lambda_1(A(G)) \leq \Delta.
\]
Equality in one place implies equality throughout; and this occurs if and only if \( G \) is regular.
\end{theorem}

The following are presented some results that will be used in Section \ref{sec::newbounds}. The first one, presented by Motzkin and Straus (\cite{Motzkin_Straus_1965}), in 1965, proved the following theorem:

\begin{theorem}\label{theo::motzkin}
    (\cite{Motzkin_Straus_1965}) Let $G$ be a graph with $n$ vertices and $x \in \real^n$ be a non-negative vector. If the maximum clique of graph $G$ has $\omega$ vertices, then
    \begin{equation}
        \max_{x}x^TAx = 1 - \dfrac{1}{\omega}
    \end{equation}
\end{theorem}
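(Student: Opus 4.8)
The plan is to read the maximization as being taken over the standard simplex $\mathcal{S} = \{x \in \real^n : x_i \geq 0,\ \sum_i x_i = 1\}$, which is the constraint implicit in the statement: without a normalization the supremum of $x^TAx$ over all nonnegative vectors is infinite, whereas on $\mathcal{S}$ the form is bounded. I would then prove the two inequalities $\max_x x^TAx \geq 1 - \tfrac{1}{\omega}$ and $\max_x x^TAx \leq 1 - \tfrac{1}{\omega}$ separately.

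The lower bound is the easy half, obtained by exhibiting a feasible point attaining the value. Let $C$ be a maximum clique, so $|C| = \omega$, and set $x_i = \tfrac{1}{\omega}$ for $v_i \in C$ and $x_i = 0$ otherwise. Writing $x^TAx = 2\sum_{v_iv_j \in E} x_i x_j$ and using that every pair of vertices of $C$ is adjacent, a direct count of the $\binom{\omega}{2}$ edges inside $C$ gives $x^TAx = 2\binom{\omega}{2}\omega^{-2} = 1 - \tfrac{1}{\omega}$, so the maximum is at least $1 - \tfrac{1}{\omega}$.

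The main work is the upper bound. By compactness of $\mathcal{S}$ and continuity of the quadratic form a maximizer exists; among all maximizers I would choose one, $x^*$, whose support (set of nonzero coordinates) has the smallest possible cardinality. The key claim is that this support is a clique. Assuming for contradiction that two vertices $v_i, v_j$ in the support are nonadjacent, I would transfer weight between coordinates $i$ and $j$, replacing $x_i^*$ by $x_i^* + t$ and $x_j^*$ by $x_j^* - t$; this keeps the point on $\mathcal{S}$. Because $v_i \nsim v_j$ the cross term $x_i x_j$ is absent, so the objective is affine in $t$ with slope $2(S_i - S_j)$, where $S_i = \sum_{k \sim i} x_k^*$ and $S_j = \sum_{k \sim j} x_k^*$. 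If $S_i \neq S_j$, moving in the increasing direction strictly raises the objective, contradicting maximality; if $S_i = S_j$, the value is unchanged and I can push $t$ until a coordinate vanishes, contradicting minimality of the support. Hence the support is a clique.

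Finally, restricting to a clique $K$ of size $s \leq \omega$ with $\sum_{i \in K} x_i = 1$, I would use the identity $x^TAx = \bigl(\sum_{i \in K} x_i\bigr)^2 - \sum_{i \in K} x_i^2 = 1 - \sum_{i \in K} x_i^2$ together with the Cauchy--Schwarz (equivalently power-mean) inequality $\sum_{i \in K} x_i^2 \geq \tfrac{1}{s}$ to obtain $x^TAx \leq 1 - \tfrac{1}{s} \leq 1 - \tfrac{1}{\omega}$. Combining the two bounds gives the claimed equality. I expect the delicate point to be the support-minimization step: one must argue carefully that a minimal-support maximizer exists and that in the degenerate case $S_i = S_j$ the weight-shifting perturbation both stays feasible and genuinely removes a vertex from the support.
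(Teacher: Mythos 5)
The paper does not prove this statement: it appears in the preliminaries as a cited result (Motzkin--Straus, 1965), so there is no in-paper argument to compare against. Judged on its own merits, your proof is correct, and it is essentially the classical proof from the original paper: normalize to the standard simplex, exhibit the uniform weighting on a maximum clique for the lower bound, and for the upper bound pass to a maximizer of minimal support, show by the weight-shifting perturbation that the support must be a clique (the affine-in-$t$ objective either contradicts maximality when $S_i \neq S_j$ or lets you shrink the support when $S_i = S_j$), and finish with $x^TAx = 1 - \sum_{i \in K} x_i^2 \leq 1 - \tfrac{1}{s} \leq 1 - \tfrac{1}{\omega}$ on the clique. The step you flag as delicate is in fact unproblematic: maximizers exist by compactness, and among them one of minimal support exists because there are only finitely many possible supports; feasibility under the shift $t = x_j^*$ is clear since only coordinates $i$ and $j$ change and their sum is preserved. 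You also correctly repaired a defect in the statement as printed: the theorem as written quantifies over all nonnegative $x \in \real^n$ with no normalization, under which the supremum is infinite whenever $G$ has an edge; the constraint $\sum_i x_i = 1$ you impose is the one intended in the source and the one actually used later in the paper (e.g.\ in the proofs of Theorems 3.5 and 3.8, where the vector fed into this result is normalized to have coordinate sum or squared-coordinate sum equal to $1$).
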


Regarding the vertices degrees, the first and second \textit{Zagreb indices} are defined respectively, in \cite{GUTMAN1972} as follows
\begin{align*}
    Z_1(G) &= \sum_{i=1}^n d^2(v_i) \quad \text{and} \\
    Z_2(G) &= \sum_{uv \in E(G)}d(u)d(v).
\end{align*}
The general first Zagreb index is defined by $\displaystyle Z^{(p)}(G) = \sum_{i=1}^n d^{p}(v_i)$, for $p \in \real$, $p \neq 0$, and $p \neq 1$, and seems to have been considered, firstly, by \cite{Li2004} and \cite{Li2005}. For $p = 2$, we have $Z^{(2)}(G) = Z_1(G)$ and the study of its bounds and properties can be found in \cite{CIOABA20061959, DasZagreb, DAS200457, NKMT2003}. The next two results introduce bounds for $Z_1(G)$. 

\begin{theorem} \label{theo::sharp_DAS}
(\cite{Das2003SHARPBF}) Let $G$ be a simple graph with $n$ vertices, $m$ edges, minimum degree $\delta$, and maximum degree $\Delta$. Then, for $n \geq 3$,
\begin{equation*}
    Z_1(G) \geq \Delta^2 + \delta^2 + \frac{(2m - \Delta - \delta)^2}{n-2}.
\end{equation*}
Moreover, equality occurs if and only if $d_2 = \dots = d_{n-1}$.
\end{theorem}

\begin{theorem} \label{theo::sharp_DAS_upper}
(\cite{Das2003SHARPBF}) Let $G$ be a connected graph with $n$ vertices, $m$ edges, and minimum degree $\delta$. Then,
\begin{equation*}
    Z_1(G) \leq 2mn -n(n-1)\delta + 2m(\delta-1).
\end{equation*}
Moreover, equality occurs if and only if $G$ is a star graph or a regular graph.
\end{theorem}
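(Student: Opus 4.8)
The plan is to bound each squared degree by a linear function of the degree, exploiting that in a simple graph every vertex degree lies between $\delta$ and $n-1$. First I would observe that for each vertex $v_i$ we have $\delta \leq d_i \leq n-1$, so that the product $(d_i - \delta)(d_i - (n-1))$ is nonpositive. Expanding this gives the pointwise inequality $d_i^2 \leq (n-1+\delta)\, d_i - \delta(n-1)$, which is the crux of the whole argument: it converts the quadratic quantity $d_i^2$ into something linear in $d_i$ that can be summed using the handshake identity.

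Next I would sum the pointwise bound over all $n$ vertices. Since $\sum_{i=1}^n d_i = 2m$, this yields
\[
Z_1(G) = \sum_{i=1}^n d_i^2 \leq (n-1+\delta)(2m) - n\delta(n-1).
\]
A routine expansion of the right-hand side, namely $(n-1+\delta)(2m) - n\delta(n-1) = 2mn - 2m + 2m\delta - n(n-1)\delta = 2mn - n(n-1)\delta + 2m(\delta-1)$, gives exactly the claimed upper bound. This part is essentially mechanical once the pointwise estimate is in place.

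The equality analysis is where the real work lies, and I expect it to be the main obstacle. Equality in the summed inequality forces equality in every term, i.e. $(d_i-\delta)(d_i-(n-1)) = 0$ for each $i$, so every vertex must have degree either $\delta$ or $n-1$. If all degrees equal $\delta$ (or all equal $n-1$), then $G$ is regular, and the forward implications—that a regular graph or a star indeed attains the bound—are direct substitutions. The delicate reverse step is to classify the connected graphs whose degree sequence takes only the two values $\delta$ and $n-1$: a vertex of degree $n-1$ is adjacent to all others, which rigidly constrains how the degree-$\delta$ vertices can be joined. I would carry this out by letting $S$ be the set of full-degree vertices and examining the induced subgraph on its complement, tracking how many neighbors each low-degree vertex is forced to have inside and outside $S$. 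This is the step that demands the most care, since it is precisely here that one must verify whether the stated dichotomy (star or regular) genuinely exhausts all graphs with degrees confined to $\{\delta, n-1\}$, rather than merely being a sufficient condition.
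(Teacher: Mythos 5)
Your proof of the inequality itself is correct, and since the paper only quotes this statement from Das (2003) without reproducing a proof, your proposal has to stand on its own merits. The pointwise estimate $d_i^2 \le (n-1+\delta)d_i - \delta(n-1)$, obtained from $(d_i-\delta)\bigl(d_i-(n-1)\bigr)\le 0$, summed via $\sum_{i=1}^n d_i = 2m$, gives exactly $2mn - n(n-1)\delta + 2m(\delta-1)$; the algebra checks out, and the sufficiency of ``star or regular'' for equality is a direct substitution, as you say.

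The genuine gap is the converse of the equality claim, which you correctly flagged as the delicate step and then only sketched --- and in fact it cannot be completed, because the stated dichotomy is false. Your argument shows that equality holds if and only if every degree lies in $\{\delta, n-1\}$. For a connected graph with $\delta < n-1$ this means: the vertices of degree $n-1$ form a set $S$ of size $s\ge 1$ each of whose members is adjacent to everything, and the remaining vertices induce a $(\delta-s)$-regular graph $H$, i.e. $G \cong K_s \vee H$. This class is strictly larger than stars and regular graphs. Concretely, the diamond $K_4 - e$ has degree sequence $(3,3,2,2)$, so $n=4$, $m=5$, $\delta=2$, $Z_1(G)=26$, while the bound equals $2\cdot 5\cdot 4 - 4\cdot 3\cdot 2 + 2\cdot 5\cdot 1 = 26$: equality, yet the graph is neither a star nor regular. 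The wheel on five vertices (degrees $4,3,3,3,3$, $m=8$, $\delta=3$, $Z_1 = 52 = 80-60+32$) is another such example. So what fails is not your approach but the equality characterization in the statement as transcribed: the correct equality class is precisely the one your method produces (regular graphs, together with joins $K_s \vee H$ with $H$ regular of degree $\delta - s$), of which the star $K_{1,n-1}$ is only the special case $s=1$, $\delta=1$. Your instinct that this step ``demands the most care'' was exactly right; had you pushed the classification through, you would have discovered the discrepancy rather than a proof.
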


The $A_\alpha-$matrix is a convex linear combination of $A(G)$ and $D(G)$, i.e. 
$$A_\alpha(G) = \alpha D(G) + (1-\alpha)A(G), \text{ for any } \alpha \in [0,1].$$ Particularly, $A_0(G) = A(G)$, $A_1(G) = D(G)$ and $2A_{\frac{1}{2}}(G) = Q(G)$. Therefore, the family of matrices $A_\alpha(G)$ clarifies a unified theory encompassing the matrices $A(G)$ and $Q(G)$. Consequently, $A(G)$, $Q(G)$ and $D(G)$ take on a new perspective, continuing to incite exploration among researchers. It is worth noting that the $A_\alpha-$ matrix is a subset of the generalized adjacency matrices, presented in \cite{VANDAM2003241}, and the universal adjacency matrix in \cite{HAEMERSUniversal}. The eigenvalues of $A_\alpha-$ matrix of a graph $G$, $A_\alpha-$ eigenvalues, are denoted by $\lambda_k(A_\alpha(G)),$ for $k = 1, \ldots, n,$ such that $\lambda_1(A_\alpha(G)) \geq \lambda_2(A_\alpha(G)) \geq \ldots \geq \lambda_n(A_\alpha(G))$ and $A_\alpha-$ spectrum is the set of $A_\alpha-$ eigenvalues.

Below we present some results involving the $A_\alpha-$matrix that will later be used to obtain some bounds for $A_\alpha-$eigenvalues. As $A_\alpha(G)$ is a real and symmetric matrix, Proposition~\ref{prop:rayleigh_alpha} follows directly from Rayleigh's principle.

\begin{proposition} \label{prop:rayleigh_alpha}
(\cite{VN17}) If $\alpha \in [0,1]$ and $G$ is a graph of order $n$, then
\begin{equation}
\lambda_1(A_\alpha(G)) = \max_{x \neq 0_{\real^n}}\dfrac{ x^TA_\alpha(G)x}{x^Tx} \text{\;\; and \;\;}  \lambda_n(A_\alpha(G)) = \min_{x \neq 0_{\real^n}} \dfrac{ x^TA_\alpha(G)x}{x^Tx}.
\end{equation}
Moreover, if $x$ is a unit vector, then $\lambda_1(A_\alpha(G)) = x^TA_\alpha(G)x$ if, and only if, $x$ is an eigenvector of $\lambda_1(A_\alpha(G))$, and $\lambda_n(A_\alpha(G)) = x^TA_\alpha(G)x$ if, and only if, $x$ is an eigenvector of $\lambda_n(A_\alpha(G))$.
\end{proposition}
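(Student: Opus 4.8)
The plan is to invoke the spectral theorem for real symmetric matrices and reduce the Rayleigh quotient to a weighted average of eigenvalues. First I would observe that $A_\alpha(G)$ is real and symmetric: since $\alpha$ and $1-\alpha$ are real scalars and both $D(G)$ and $A(G)$ are real and symmetric, so is their linear combination. Hence the spectral theorem guarantees an orthonormal basis $\{u_1, \ldots, u_n\}$ of $\real^n$ consisting of eigenvectors of $A_\alpha(G)$, with $A_\alpha(G)u_i = \lambda_i(A_\alpha(G))u_i$ and the eigenvalues ordered as $\lambda_1(A_\alpha(G)) \geq \cdots \geq \lambda_n(A_\alpha(G))$.

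Next, for any nonzero $x \in \real^n$ I would expand $x = \sum_{i=1}^n c_i u_i$ in this basis. Orthonormality yields $x^Tx = \sum_{i=1}^n c_i^2$ and $x^T A_\alpha(G) x = \sum_{i=1}^n \lambda_i(A_\alpha(G)) c_i^2$, so the Rayleigh quotient becomes $\frac{\sum_i \lambda_i(A_\alpha(G)) c_i^2}{\sum_i c_i^2}$, a weighted average of the eigenvalues with nonnegative weights $c_i^2 / \sum_j c_j^2$ summing to one. Such an average lies between $\lambda_n(A_\alpha(G))$ and $\lambda_1(A_\alpha(G))$; taking $x = u_1$ attains the value $\lambda_1(A_\alpha(G))$ and $x = u_n$ attains $\lambda_n(A_\alpha(G))$, which establishes both the maximum and minimum formulas.

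For the \emph{moreover} part I would argue directly from the equality case of this averaging bound. Suppose $x$ is a unit vector with $x^T A_\alpha(G)x = \lambda_1(A_\alpha(G))$. Then $\sum_{i=1}^n \bigl(\lambda_1(A_\alpha(G)) - \lambda_i(A_\alpha(G))\bigr)c_i^2 = 0$ with every summand nonnegative, forcing $c_i = 0$ whenever $\lambda_i(A_\alpha(G)) < \lambda_1(A_\alpha(G))$; thus $x$ is a unit combination of eigenvectors belonging to $\lambda_1(A_\alpha(G))$ and is itself an eigenvector for that eigenvalue. Conversely, any eigenvector $x$ of $\lambda_1(A_\alpha(G))$ clearly yields $x^T A_\alpha(G)x = \lambda_1(A_\alpha(G))\, x^Tx = \lambda_1(A_\alpha(G))$. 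The same computation with the roles reversed (equivalently, applied to $-A_\alpha(G)$) handles $\lambda_n(A_\alpha(G))$.

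I do not expect a genuine obstacle here, since the statement is the classical Rayleigh--Ritz characterization of the extreme eigenvalues. The only point requiring care is the equality case when $\lambda_1(A_\alpha(G))$ (or $\lambda_n(A_\alpha(G))$) has multiplicity greater than one: there the whole eigenspace, rather than a single distinguished vector, achieves the extremum, so the correct conclusion is that $x$ lies in that eigenspace, which is precisely the meaning of being an eigenvector of $\lambda_1(A_\alpha(G))$.
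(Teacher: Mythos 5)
Your proposal is correct and matches the paper's route: the paper treats this as an immediate consequence of Rayleigh's principle, noting only that $A_\alpha(G) = \alpha D(G) + (1-\alpha)A(G)$ is real and symmetric, and you have simply written out the standard proof of that principle (spectral theorem, eigenbasis expansion, weighted-average bound, and the equality case). Nothing is missing; your care with the multiplicity of the extreme eigenvalues in the \emph{moreover} part is exactly the right reading of the statement.
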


The next proposition introduces the Perron-Frobenius results for the $A_{\alpha}-$matrix.

\begin{proposition}  \label{prop::Perron_alpha}
 (\cite{VN17}) Let $\alpha \in [0, 1)$, $G$ be a graph, and $x$ be a nonnegative eigenvector to $\lambda_1(A_{\alpha}(G))$.
\begin{enumerate}
    \item[(a)] If $G$ is connected, then $x$ is positive and is unique up to scaling.
    \item[(b)] If $G$ is not connected and $P$ is the set of vertices with positive entries in $x$, then the subgraph induced by $P$ is a union of components $H$ of $G$ with $\lambda_1(A_{\alpha}(H)) = \lambda_1(A_{\alpha}(G))$.
    \item[(c)] If $G$ is connected and $\mu$ is an eigenvalue of $A_{\alpha}(G)$ with a nonnegative eigenvector, then $\mu = \lambda_1(A_{\alpha}(G))$.
    \item[(d)] If $G$ is connected, and $H$ is a proper subgraph of $G$, then $\lambda_1(A_{\alpha}(H)) < \lambda_1(A_{\alpha}(G))$ for any $\alpha \in [0, 1)$.
\end{enumerate}
\end{proposition}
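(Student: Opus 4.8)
The plan is to derive all four parts as consequences of the Perron--Frobenius theory of nonnegative irreducible matrices applied to $A_\alpha(G)$. The starting observation is that for $\alpha \in [0,1)$ the matrix $A_\alpha(G) = \alpha D(G) + (1-\alpha)A(G)$ is entrywise nonnegative: its diagonal entries are $\alpha d_i \ge 0$ and its off-diagonal entries are $(1-\alpha)a_{ij} \ge 0$, where $1-\alpha > 0$. Because $1-\alpha > 0$, the off-diagonal zero/nonzero pattern of $A_\alpha(G)$ coincides exactly with that of $A(G)$; hence $A_\alpha(G)$ is irreducible if and only if $A(G)$ is, i.e. if and only if $G$ is connected. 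This reduction is what lets the whole proposition follow from standard spectral facts about nonnegative matrices, with $\lambda_1(A_\alpha(G))$ playing the role of the spectral radius since $A_\alpha(G)$ is symmetric.

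For part (a), I would apply the Perron--Frobenius theorem to the nonnegative irreducible matrix $A_\alpha(G)$: its spectral radius, which here equals $\lambda_1(A_\alpha(G))$, is a simple eigenvalue whose eigenspace is spanned by a positive vector. Thus any nonnegative eigenvector for $\lambda_1(A_\alpha(G))$ lies in this one-dimensional eigenspace and must be positive and unique up to scaling. Part (c) is the complementary uniqueness statement of the same theorem: for an irreducible nonnegative matrix the only eigenvalue admitting a nonnegative eigenvector is the spectral radius, so any eigenvalue $\mu$ with a nonnegative eigenvector satisfies $\mu = \lambda_1(A_\alpha(G))$.

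Part (b) is obtained by decomposing the disconnected graph into its connected components $H_1,\dots,H_t$. After relabeling, $A_\alpha(G)$ is block diagonal with blocks $A_\alpha(H_j)$, so its spectrum is the union of the spectra of the blocks and $\lambda_1(A_\alpha(G)) = \max_j \lambda_1(A_\alpha(H_j))$. Writing a nonnegative eigenvector $x$ for $\lambda_1(A_\alpha(G))$ in block form, each nonzero block $x_j$ is a nonnegative eigenvector of the irreducible matrix $A_\alpha(H_j)$ for the eigenvalue $\lambda_1(A_\alpha(G))$; by part (c) this forces $\lambda_1(A_\alpha(H_j)) = \lambda_1(A_\alpha(G))$ and, by part (a), forces $x_j$ to be strictly positive. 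Hence the positive support $P$ is precisely the union of those whole components attaining the maximum value $\lambda_1(A_\alpha(G))$, which is the claim.

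The most delicate part is (d), and I would handle it through strict monotonicity of the spectral radius under entrywise domination. The key computation is that passing from $G$ to a proper subgraph $H$ can only decrease the entries of the $A_\alpha$-matrix: deleting an edge $uv$ lowers both $d(u)$ and $d(v)$ (shrinking the corresponding diagonal entries) and zeroes the off-diagonal entry at position $(u,v)$, while deleting a vertex additionally removes a whole nonzero row and column. Embedding $A_\alpha(H)$ as a principal block padded with zeros on $V(G)$ yields a nonnegative matrix $\tilde A$ with $\tilde A \le A_\alpha(G)$ entrywise and $\tilde A \ne A_\alpha(G)$, whose largest eigenvalue equals $\lambda_1(A_\alpha(H))$ since $\lambda_1(A_\alpha(H)) \ge 0$. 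Because $A_\alpha(G)$ is irreducible, the Perron--Frobenius strict monotonicity principle gives $\rho(\tilde A) < \rho(A_\alpha(G))$, that is, $\lambda_1(A_\alpha(H)) < \lambda_1(A_\alpha(G))$. The main obstacle is exactly this entrywise-domination observation: one must notice that although the $A_\alpha$-matrix couples adjacency and degree information, removing edges or vertices decreases the diagonal degree terms in lockstep with the off-diagonal adjacency terms, so the clean inequality $\tilde A \le A_\alpha(G)$ survives and the usual adjacency-type argument carries over verbatim.
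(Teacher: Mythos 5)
Your proof is correct, but note that there is nothing in the paper to compare it against: this proposition is stated as a preliminary and cited from Nikiforov \cite{VN17} without proof, and your Perron--Frobenius route (entrywise nonnegativity of $A_\alpha(G)$ for $\alpha\in[0,1)$, irreducibility exactly when $G$ is connected, block-diagonal decomposition over components for (b), and strict monotonicity of the spectral radius under entrywise domination for (d)) is precisely the standard argument used in that original source. One small wording point: in (b) you conclude that $P$ is ``precisely the union of those whole components attaining the maximum,'' but what your argument shows --- and all the statement requires --- is that $P$ is \emph{a} union of such components; the eigenvector may legitimately vanish on some components that also attain $\lambda_1(A_\alpha(G))$, so ``precisely'' overstates the conclusion.
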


Proposition \ref{prop::bound_eigenvalues_nikiforov}, Nikiforov introduces a bound for each $A_\alpha-$ eigenvalue based on graph degrees.

\begin{proposition} \label{prop::bound_eigenvalues_nikiforov}
    (\cite{VN17}) Let \( G \) be a graph of order \( n \), with degrees \( \Delta = d_1 \geq d_2 \geq \cdots \geq d_n \). If \( k \in \{1, \cdots, n\} \), then \( \lambda_k(A_\alpha(G)) \leq d_k \). In particular, \( \lambda_1(A_\alpha(G)) \leq \Delta \).

\end{proposition}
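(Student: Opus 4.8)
The plan is to bound $\lambda_k(A_\alpha(G))$ by the largest eigenvalue of a carefully chosen principal submatrix and then control that eigenvalue by a maximum row sum. Recall that, by the labelling convention adopted earlier, the degree matrix $D(G)$ carries the sorted degrees $d_1 \ge d_2 \ge \cdots \ge d_n$ on its diagonal, so $A_\alpha(G) = \alpha D(G) + (1-\alpha)A(G)$ is a \emph{nonnegative} symmetric matrix whose rows and columns are indexed by $v_1, \ldots, v_n$ in order of non-increasing degree.

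First I would invoke the variational (Courant--Fischer) characterisation of the $k$-th eigenvalue to write
\[
\lambda_k(A_\alpha(G)) = \min_{\dim S = n-k+1}\ \max_{0 \neq x \in S} \frac{x^T A_\alpha(G)\, x}{x^T x},
\]
which generalises the Rayleigh quotient already recorded in Proposition~\ref{prop:rayleigh_alpha}. To obtain an upper bound it suffices to exhibit a single subspace $S$ of dimension $n-k+1$; I would take $S = \mathrm{span}\{e_k, e_{k+1}, \ldots, e_n\}$, the coordinate subspace supported on the $n-k+1$ vertices of smallest degree. For $x \in S$ the quotient depends only on the principal submatrix $B := A_\alpha(G)[\{v_k, \ldots, v_n\}]$, so this maximum equals $\lambda_1(B)$; equivalently, this is Cauchy interlacing applied to $B$. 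Hence $\lambda_k(A_\alpha(G)) \le \lambda_1(B)$.

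It then remains to show $\lambda_1(B) \le d_k$. Since $B$ is a nonnegative symmetric matrix, its largest eigenvalue is bounded above by its maximum row sum. The row of $B$ indexed by $v_i$ (with $k \le i \le n$) sums to $\alpha d_i + (1-\alpha)\,\lvert N(v_i) \cap \{v_k, \ldots, v_n\}\rvert$, and since the number of neighbours of $v_i$ lying inside $\{v_k, \ldots, v_n\}$ never exceeds the full degree $d_i$, this row sum is at most $\alpha d_i + (1-\alpha) d_i = d_i \le d_k$, the last inequality using that the degrees are sorted in non-increasing order. Combining the two estimates gives $\lambda_k(A_\alpha(G)) \le d_k$, and the special case $k=1$ yields $\lambda_1(A_\alpha(G)) \le \Delta$.

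The individual steps are routine; the one point requiring care is the passage from the abstract min--max to a concrete estimate, namely the choice of the coordinate subspace on the \emph{lowest}-degree vertices (rather than an arbitrary subspace of the right dimension) so that the restricted submatrix has every row sum at most $d_k$. The sorted-degree labelling is precisely what makes this choice valid and what singles out $d_k$ as the relevant threshold.
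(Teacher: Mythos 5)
Your proof is correct. Note that the paper itself gives no proof of this proposition---it is stated as a quoted result from Nikiforov \cite{VN17}---and your argument (Cauchy interlacing via the coordinate subspace on the $n-k+1$ lowest-degree vertices, followed by the maximum-row-sum bound on the nonnegative principal submatrix, whose diagonal still carries the full degrees $\alpha d_i$) is essentially Nikiforov's original proof of this fact.
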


The next lemma and proposition, presented by Nikiforov, introduces the spectrum of a regular graph and the $A_\alpha(K_n)$, respectively.

\begin{lemma} \label{lemma::eigeneq_RegularGraphs}
\cite{VN17} If $\alpha \in [0,1]$ and $k = 1, \ldots, n$ and $G$ is a $r$-regular graph of order $n$, then there exists a linear correspondence between the eigenvalues of $A_\alpha(G)$ and $A(G)$, the following way
\begin{equation} \label{eq::autoequation}
\lambda_k(A_\alpha(G)) = \alpha r + (1-\alpha)\lambda_k(A(G)).
\end{equation}
In particular, if $G$ is $r$-regular, then $\lambda_1(A_\alpha(G)) = r, \ \ \forall \alpha \in [0,1]$.
\end{lemma}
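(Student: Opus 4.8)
The plan is to exploit the structural simplification that regularity forces on the degree matrix. The key observation is that if $G$ is $r$-regular, then every diagonal entry of $D(G)$ equals $r$, so $D(G) = rI_n$. Substituting this into the definition of the $A_\alpha$-matrix gives
\begin{equation*}
A_\alpha(G) = \alpha D(G) + (1-\alpha)A(G) = \alpha r I_n + (1-\alpha)A(G).
\end{equation*}
Thus $A_\alpha(G)$ is an affine (linear) function of $A(G)$ alone, with no mixing between the two summands beyond scalar multiplication and a scalar shift.

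From here I would argue at the level of eigenvectors. Let $u_k$ be a unit eigenvector of $A(G)$ associated with $\lambda_k(A(G))$. Applying the displayed identity to $u_k$ yields
\begin{equation*}
A_\alpha(G)\,u_k = \alpha r\, u_k + (1-\alpha)A(G)\,u_k = \bigl(\alpha r + (1-\alpha)\lambda_k(A(G))\bigr)u_k,
\end{equation*}
so $u_k$ is also an eigenvector of $A_\alpha(G)$, now with eigenvalue $\alpha r + (1-\alpha)\lambda_k(A(G))$. Since $A(G)$ is symmetric it admits an orthonormal eigenbasis $u_1,\dots,u_n$, and this same basis diagonalizes $A_\alpha(G)$; hence the multiset of $A_\alpha$-eigenvalues is exactly the image of the $A$-eigenvalues under the map $\lambda \mapsto \alpha r + (1-\alpha)\lambda$.

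The one point requiring a moment of care is the indexing. Both spectra are indexed in non-increasing order, so to conclude $\lambda_k(A_\alpha(G)) = \alpha r + (1-\alpha)\lambda_k(A(G))$ for each $k$ I must check that the affine map preserves the ordering. Because $\alpha \in [0,1]$ we have $1-\alpha \geq 0$, so $\lambda \mapsto \alpha r + (1-\alpha)\lambda$ is non-decreasing; it therefore sends the $k$-th largest $A$-eigenvalue to the $k$-th largest $A_\alpha$-eigenvalue, and Equation~\eqref{eq::autoequation} follows for every $k = 1,\dots,n$. This monotonicity step is the only substantive (if minor) obstacle; the rest is a direct computation.

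For the closing assertion, I would invoke Theorem~\ref{theo::bounds_adjacency}: for an $r$-regular graph the minimum, mean, and maximum degrees all equal $r$, so the chain $\delta \leq \bar{d} \leq \lambda_1(A(G)) \leq \Delta$ collapses to $\lambda_1(A(G)) = r$. Setting $k=1$ in \eqref{eq::autoequation} then gives $\lambda_1(A_\alpha(G)) = \alpha r + (1-\alpha)r = r$ for all $\alpha \in [0,1]$, as claimed.
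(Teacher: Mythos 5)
Your proof is correct and follows the standard (and essentially the only) route: the paper does not prove this lemma at all --- it quotes it from \cite{VN17} --- and your observation that regularity forces $D(G)=rI_n$, so that $A_\alpha(G)=\alpha r I_n+(1-\alpha)A(G)$ shares an eigenbasis with $A(G)$, combined with the monotonicity of $\lambda\mapsto\alpha r+(1-\alpha)\lambda$ to keep the non-increasing indexing aligned, is precisely the argument behind the cited result. The only slight imprecision is in the final step: Theorem~\ref{theo::bounds_adjacency} is stated for \emph{connected} graphs, so for a possibly disconnected regular graph it is cleaner to note that the all-ones vector is an eigenvector of $A(G)$ with eigenvalue $r$, while $\lambda_1(A_\alpha(G))\leq\Delta=r$ by Proposition~\ref{prop::bound_eigenvalues_nikiforov}, which forces $\lambda_1(A_\alpha(G))=r$.
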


\begin{proposition} \label{prop::complete_graph_spectrum}
\cite{VN17} The eigenvalues of $A_\alpha(K_n)$ are $\lambda_1(A_\alpha(K_n)) = n-1$ and $\lambda_k(A_\alpha(K_n)) = \alpha n -1 \text{  for } 2 \leq k \leq n$ and $\alpha \in [0,1]$.
\end{proposition}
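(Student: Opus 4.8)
The plan is to exploit the highly symmetric structure of $K_n$ by writing $A_\alpha(K_n)$ explicitly in terms of the identity matrix $I_n$ and the all-ones matrix $J_n$, whose spectrum is elementary. Since $K_n$ is $(n-1)$-regular, its degree matrix is $D(K_n) = (n-1)I_n$, and its adjacency matrix is $A(K_n) = J_n - I_n$. Substituting into the definition of the $A_\alpha$-matrix gives
\begin{equation*}
A_\alpha(K_n) = \alpha(n-1)I_n + (1-\alpha)(J_n - I_n) = (\alpha n - 1)I_n + (1-\alpha)J_n .
\end{equation*}

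First I would recall that $J_n$ has eigenvalue $n$ with multiplicity one, associated with the all-ones vector $\mathbf{1}$, and eigenvalue $0$ with multiplicity $n-1$, associated with the hyperplane orthogonal to $\mathbf{1}$. Because $A_\alpha(K_n)$ is an affine function of $J_n$ (the term $(\alpha n -1)I_n$ merely shifts every eigenvalue by $\alpha n - 1$ while preserving eigenvectors), the two matrices share the same eigenvectors. Applying the shift to each eigenvalue of $(1-\alpha)J_n$ then yields $(\alpha n -1) + (1-\alpha)n = n-1$ for the all-ones eigenvector, and $(\alpha n -1) + (1-\alpha)\cdot 0 = \alpha n -1$ with multiplicity $n-1$ for the remaining eigenvectors. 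Ordering these gives the claimed spectrum.

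Alternatively, and perhaps more in keeping with the earlier results, I would invoke Lemma~\ref{lemma::eigeneq_RegularGraphs}: as $K_n$ is $(n-1)$-regular, the relation $\lambda_k(A_\alpha(K_n)) = \alpha(n-1) + (1-\alpha)\lambda_k(A(K_n))$ holds for every $k$. It then suffices to know the adjacency spectrum of $K_n$, namely $\lambda_1(A(K_n)) = n-1$ and $\lambda_k(A(K_n)) = -1$ for $2 \le k \le n$; substituting these gives $\alpha(n-1)+(1-\alpha)(n-1) = n-1$ and $\alpha(n-1)-(1-\alpha) = \alpha n -1$, as required.

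There is no genuine obstacle here; the only point requiring any care is confirming the ordering of the eigenvalues so that $\lambda_1 = n-1$ is indeed the largest, which follows since $(1-\alpha)n \ge 0$ for $\alpha \in [0,1]$, and—if one takes the second route—having the adjacency spectrum of $K_n$ available as a standard fact.
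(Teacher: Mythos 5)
Your proposal is correct: both routes are valid, and the verification of the ordering ($n-1 \geq \alpha n - 1$ for $\alpha \in [0,1]$) is properly addressed. Note that the paper itself offers no proof of this proposition—it is quoted directly from \cite{VN17}—and your second derivation, combining Lemma~\ref{lemma::eigeneq_RegularGraphs} with the standard adjacency spectrum of $K_n$, is exactly the argument the paper's own toolkit suggests, so nothing further is needed.
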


The next lemma relates the eigenvalues of the $A_\alpha$-matrix, the number of edges and the first Zagreb index.

\begin{lemma} \label{lemma::sum_eigenvalues}
    (\cite{VN17}) If $G$ is a graph of order $n$ and has $m$ edges, then
    \begin{enumerate}
        \item [(i)] $\displaystyle \sum_{i=1}^n \lambda_i(A_\alpha(G)) = 2m\alpha.$
        \item [(ii)] $\displaystyle \sum_{i=1}^n \lambda_i^2(A_\alpha(G)) = \alpha^2Z_1(G) + (1-\alpha)^22m.$
    \end{enumerate}
\end{lemma}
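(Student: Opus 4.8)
The plan is to recognize both identities as trace computations, since for any real symmetric matrix $M$ the power sums of its eigenvalues equal traces of its powers: $\sum_{i=1}^n \lambda_i(M) = \operatorname{tr}(M)$ and $\sum_{i=1}^n \lambda_i^2(M) = \operatorname{tr}(M^2)$. As $A_\alpha(G)$ is real and symmetric, both sums on the left-hand sides can be replaced by traces, and the work reduces to evaluating these traces using the definition $A_\alpha(G) = \alpha D(G) + (1-\alpha)A(G)$.

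For part (i), I would apply linearity of the trace to obtain $\operatorname{tr}(A_\alpha(G)) = \alpha \operatorname{tr}(D(G)) + (1-\alpha)\operatorname{tr}(A(G))$. Then I would use two elementary facts: $\operatorname{tr}(D(G)) = \sum_{i=1}^n d_i = 2m$ by the handshake lemma, and $\operatorname{tr}(A(G)) = 0$ because $G$ is simple, so the diagonal of $A(G)$ vanishes. Combining these gives $2m\alpha$.

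For part (ii), I would expand $A_\alpha(G)^2 = \alpha^2 D^2 + \alpha(1-\alpha)(DA + AD) + (1-\alpha)^2 A^2$ and again invoke linearity of the trace. The three traces needed are: $\operatorname{tr}(D^2) = \sum_{i=1}^n d_i^2 = Z_1(G)$ directly from the definition of the first Zagreb index; $\operatorname{tr}(DA) = \operatorname{tr}(AD) = 0$, since the diagonal entries of $DA$ equal $d_i\,(A)_{ii} = 0$; and $\operatorname{tr}(A^2) = \sum_{i=1}^n (A^2)_{ii} = \sum_{i=1}^n \sum_{k=1}^n (A)_{ik}^2 = \sum_{i=1}^n d_i = 2m$, using that the entries of $A$ are $0$–$1$ so that $(A)_{ik}^2 = (A)_{ik}$ and $\sum_k (A)_{ik}$ counts the neighbors of $v_i$. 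Substituting yields $\alpha^2 Z_1(G) + (1-\alpha)^2 2m$.

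There is no genuine obstacle here; the computation is routine. The only point requiring a little care is the cross-term, where one must check that $\operatorname{tr}(DA + AD)$ vanishes precisely because $A$ has zero diagonal. This is exactly what eliminates the mixed contribution and keeps the final expression free of any $Z_2$-type term, leaving a clean combination of $Z_1(G)$ and $2m$.
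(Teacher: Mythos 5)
Your proof is correct. The paper does not prove this lemma itself (it is quoted from Nikiforov's paper \cite{VN17}), and your argument --- identifying both sums as $\operatorname{tr}(A_\alpha(G))$ and $\operatorname{tr}(A_\alpha(G)^2)$, then using $\operatorname{tr}(D)=2m$, $\operatorname{tr}(A)=0$, $\operatorname{tr}(D^2)=Z_1(G)$, $\operatorname{tr}(DA+AD)=0$, and $\operatorname{tr}(A^2)=2m$ --- is exactly the standard trace computation by which this result is established in the source.
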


The following proposition, presented by Lin \textit{et al.} (\cite{LIN2019441}), establishes a criterion for graphs determined by its spectrum.

\begin{proposition}\label{prop::DS_Aalpha}
(\cite{LIN2019441}) Let \( G \) be an \( r-\)regular graph determined by its \( A- \)spectrum (respectively, \( L-\)spectrum or \( Q-\)spectrum). Then \( G \) is also determined by its \( A_\alpha \)-spectrum.
\end{proposition}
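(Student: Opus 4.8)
The plan is to fix $\alpha \in (0,1)$ and show that any graph $H$ sharing the $A_\alpha$-spectrum of $G$ is forced to be $r$-regular; once this is secured, the linear correspondence of Lemma~\ref{lemma::eigeneq_RegularGraphs} transfers $A_\alpha$-cospectrality into ordinary $A$-cospectrality, and the hypothesis on $G$ closes the argument. The case $\alpha = 0$ is immediate, since $A_0(G) = A(G)$ and the hypothesis is then literally that $G$ is determined by its $A$-spectrum; the argument below needs $\alpha \neq 0$ to read off the degrees and $\alpha \neq 1$ to invert the regular-graph relation, which is why the statement lives in $[0,1)$.

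First I would record that $A_\alpha$-cospectral graphs have the same order $n$. Then, by Lemma~\ref{lemma::sum_eigenvalues}(i), equality of the traces $\sum_i \lambda_i(A_\alpha(\cdot)) = 2m\alpha$ together with $\alpha \neq 0$ gives $m_H = m_G =: m$. Since $G$ is $r$-regular, $2m = nr$ and $Z_1(G) = nr^2$. Next, Lemma~\ref{lemma::sum_eigenvalues}(ii) and equality of $\sum_i \lambda_i^2(A_\alpha(\cdot)) = \alpha^2 Z_1(\cdot) + (1-\alpha)^2 2m$, again using $\alpha \neq 0$, yield $Z_1(H) = Z_1(G) = nr^2$.

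The main step, and the only real obstacle, is to extract regularity from these two moment constraints: $H$ has a degree sequence satisfying $\sum_i d_i(H) = nr$ and $\sum_i d_i^2(H) = nr^2$. By the Cauchy--Schwarz inequality, $n\sum_i d_i^2(H) \geq \bigl(\sum_i d_i(H)\bigr)^2$, i.e. $\sum_i d_i^2(H) \geq nr^2$, with equality if and only if all the $d_i(H)$ coincide. Since equality holds here, every vertex of $H$ has degree $r$, so $H$ is $r$-regular. This is where one must be careful, because nothing cheaper than both identities simultaneously forces constancy of the degrees.

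Finally, with $G$ and $H$ both $r$-regular, Lemma~\ref{lemma::eigeneq_RegularGraphs} gives $\lambda_k(A_\alpha(\cdot)) = \alpha r + (1-\alpha)\lambda_k(A(\cdot))$ for each $k$; equating the $A_\alpha$-spectra and cancelling $1-\alpha \neq 0$ shows $\sigma(A(G)) = \sigma(A(H))$, so $H$ is $A$-cospectral with $G$ and the hypothesis forces $H \cong G$. For the parenthetical $Q$- and $L$-versions the regularity argument is identical; then, since for an $r$-regular graph $Q = rI + A$ and $L = rI - A$, the established $A$-cospectrality upgrades to $Q$- (respectively $L$-) cospectrality, and the corresponding determined-by-spectrum hypothesis again yields $H \cong G$.
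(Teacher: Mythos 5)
The first thing to note is that the paper contains no proof of this proposition: it is stated as a preliminary result, quoted from the cited source \cite{LIN2019441}, so there is no internal argument to compare yours against. Judged on its own (and against the standard proof in the literature, which it reproduces), your argument is correct. The chain — equal traces plus Lemma~\ref{lemma::sum_eigenvalues}(i) and $\alpha \neq 0$ give $m_H = m_G$; equal second moments plus Lemma~\ref{lemma::sum_eigenvalues}(ii) give $Z_1(H) = Z_1(G) = nr^2$; the Cauchy--Schwarz equality case forces all degrees of $H$ to equal $r$; and Lemma~\ref{lemma::eigeneq_RegularGraphs} with $1 - \alpha \neq 0$ converts $A_\alpha$-cospectrality into $A$-cospectrality — is exactly the right mechanism, and your reduction of the parenthetical variants through $Q = rI + A$ and $L = rI - A$ for $r$-regular graphs is sound.

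One loose end: your dismissal of $\alpha = 0$ (``the hypothesis is then literally that $G$ is determined by its $A$-spectrum'') only covers the $A$-spectrum version of the hypothesis. If $G$ is instead assumed determined by its $Q$- or $L$-spectrum and $\alpha = 0$, you are handed an $A$-cospectral mate $H$ and must still show $H$ is regular before the identity $Q = rI + A$ can be invoked. This is easily patched — $n$, $m = \frac{1}{2}\sum_i \lambda_i^2(A)$ and $\lambda_1(A)$ are all $A$-spectral invariants, and $\lambda_1(A) \geq 2m/n$ holds for every graph with equality if and only if the all-ones vector is an eigenvector, i.e.\ if and only if the graph is regular — but as written your proof skips this case.
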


\section{Main results} \label{sec::newbounds}

In this section, some new bounds are presented for the largest and smallest eigenvalues of the $A_{\alpha}-$matrix. The first theorem extends the upper bound established by Cioabă (\cite{CIOABA2007483}) for $\lambda_1(A(G))$, addressing the largest eigenvalue of the $A_{\alpha}-$matrix. This upper bound for $\lambda_1(A_\alpha(G))$ uses the order and size, the diameter, the  maximum degree of $G$ and the parameter $\alpha.$

\begin{theorem} \label{prop::cioaba}
    Let $G$ be a connected irregular graph with $n$ vertices, $m$ edges, maximum degree $\Delta$, diameter $diam(G)$ and $\alpha \in [0,1].$ Then
    \begin{equation}
        \lambda_1(A_\alpha(G)) < \Delta - (1-\alpha)\dfrac{n \Delta - 2m}{n(diam(G)(n\Delta - 2m) + 1)}.
    \end{equation}
\end{theorem}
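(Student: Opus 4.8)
The plan is to reduce the statement to Cioab\'a's adjacency bound (\cite{CIOABA2007483}) by exploiting the convex-combination structure $A_\alpha(G) = \alpha D(G) + (1-\alpha)A(G)$. First I would recall that Cioab\'a proved, for every connected irregular graph $G$, the estimate
\[
\lambda_1(A(G)) < \Delta - \frac{n\Delta - 2m}{n(diam(G)(n\Delta - 2m) + 1)},
\]
which is precisely the $\alpha = 0$ instance of the claimed inequality. Everything then hinges on transferring this bound from $A(G)$ to $A_\alpha(G)$ while keeping exact track of the factor $(1-\alpha)$.

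For the transfer step I would invoke Proposition~\ref{prop:rayleigh_alpha}: let $x$ be a unit eigenvector associated with $\lambda_1(A_\alpha(G))$, so that $\lambda_1(A_\alpha(G)) = x^T A_\alpha(G) x = \alpha\, x^T D(G) x + (1-\alpha)\, x^T A(G) x$. I would bound the two summands separately. Since $x$ is a unit vector, $x^T D(G) x = \sum_i d_i x_i^2 \leq \Delta \sum_i x_i^2 = \Delta$, the maximum degree playing the role of $\lambda_1(D(G))$; and by Rayleigh's principle applied to $A(G)$, $x^T A(G) x \leq \lambda_1(A(G))$. Because $\alpha \geq 0$ and $1-\alpha \geq 0$, these combine to
\[
\lambda_1(A_\alpha(G)) \leq \alpha \Delta + (1-\alpha)\lambda_1(A(G)).
\]

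Finally, I would substitute Cioab\'a's strict bound for $\lambda_1(A(G))$ into the right-hand side; the terms $\alpha\Delta$ and $(1-\alpha)\Delta$ assemble into a single $\Delta$, and the correction term is scaled by $(1-\alpha)$, producing exactly
\[
\lambda_1(A_\alpha(G)) < \Delta - (1-\alpha)\frac{n\Delta - 2m}{n(diam(G)(n\Delta - 2m) + 1)}.
\]
The only delicate point --- and the single place where strictness could be lost --- is the factor $(1-\alpha)$: the strict inequality inherited from Cioab\'a survives precisely when $\alpha < 1$, since at $\alpha = 1$ one has $A_1(G) = D(G)$ and $\lambda_1(D(G)) = \Delta$, so the estimate degenerates to an equality. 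I would therefore carry the argument through for $\alpha \in [0,1)$ and record the boundary case separately. As an aside, one could instead mimic Cioab\'a's original Perron-vector argument directly for $A_\alpha(G)$, starting from the identity $\Delta - \lambda_1(A_\alpha(G)) = \sum_i(\Delta - d_i)x_i^2 + (1-\alpha)\sum_{uv\in E}(x_u - x_v)^2$ and applying a Cauchy--Schwarz estimate along a diametral path; this reproduces the same bound (in fact with the marginally smaller denominator $diam(G)(n\Delta - 2m) + 1 - \alpha$), but the reduction above is shorter and already suffices.
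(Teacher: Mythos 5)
Your proof is correct (for $\alpha\in[0,1)$) but takes a genuinely different route from the paper. The paper does not black-box Cioab\u{a}'s theorem: it re-runs Cioab\u{a}'s Perron-vector argument directly on $A_\alpha(G)$ --- taking the positive unit eigenvector $x$ of $A_\alpha(G)$, writing $(1-\alpha)\Delta-\lambda_1(A_\alpha(G))$ in terms of $\sum_{v_i\sim v_j}(x_i-x_j)^2+\sum_i(\Delta-d_i)x_i^2$, applying Cauchy--Schwarz along a shortest path between the vertices carrying the maximal and minimal entries $x_s$, $x_t$, minimizing the resulting quadratic in $x_t$, and finishing with $x_s^2>1/n$. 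Your reduction instead transfers the known adjacency bound through the Weyl-type estimate $\lambda_1(A_\alpha(G))\leq \alpha\Delta+(1-\alpha)\lambda_1(A(G))$, which is valid and makes the theorem a three-line corollary of the cited result; the trade-off is that the paper's argument is self-contained, and the direct method is what actually generalizes (your closing aside is accurate: running Cioab\u{a}'s argument on $A_\alpha$ with the cleaner identity $\Delta-\lambda_1(A_\alpha(G))=\sum_i(\Delta-d_i)x_i^2+(1-\alpha)\sum_{v_iv_j\in E}(x_i-x_j)^2$ yields the marginally sharper denominator $diam(G)(n\Delta-2m)+1-\alpha$, which even the paper does not obtain, since it discards $\alpha\,x^TD(G)x$ via the cruder bound $-\alpha x^TD(G)x\geq-\alpha\Delta$).

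Your remark about $\alpha=1$ is not merely a technicality --- it exposes a flaw in the statement as printed. At $\alpha=1$ the right-hand side equals $\Delta$ while $\lambda_1(A_1(G))=\lambda_1(D(G))=\Delta$, so the strict inequality fails; the paper's own proof also silently needs $\alpha<1$, both because the positive Perron eigenvector invoked at the outset exists only for $\alpha\in[0,1)$ (Proposition~\ref{prop::Perron_alpha}) and because the final strict inequality $x_s^2>1/n$ is multiplied by the factor $(1-\alpha)$, which vanishes at $\alpha=1$. Your decision to prove the result for $\alpha\in[0,1)$ and treat the boundary case separately is therefore the correct formulation.
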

\begin{proof}
    Let $x$ be the unique unit positive eigenvector of $A_\alpha(G)$ associated to the eigenvalue $\lambda_1(A_\alpha(G)).$ From Proposition~\ref{prop:rayleigh_alpha} we have
    \begin{align*}
        \lambda_1(A_\alpha(G)) &= x^TA_\alpha(G)x \\
        &=\alpha x^TD(G)x + (1-\alpha)x^TA(G)x\\
        &=\alpha x^TD(G)x + 2(1-\alpha)\displaystyle \sum_{v_i \sim v_j}x_ix_j\\
        &=\alpha x^TD(G)x + (1-\alpha)\displaystyle\left[\sum_{v_i \sim v_j}(x_i^2 + x_j^2)- \sum_{v_i \sim v_j}(x_i - x_j)^2\right]\\
        &=\alpha x^TD(G)x + (1-\alpha)\displaystyle\left[\sum_{i=1}^n d_ix_i^2- \sum_{v_i \sim v_j}(x_i - x_j)^2\right].
    \end{align*}
    So,
     {\small{
    \begin{equation} \label{eq::eq1_teo3.1}
    (1-\alpha)\Delta - \lambda_1(A_\alpha(G)) = - \alpha x^TD(G)x + (1-\alpha)\displaystyle\left[\sum_{v_i \sim v_j}(x_i - x_j)^2 + \sum_{i=1}^n (\Delta - d_i)x_i^2\right]
    \end{equation}}}
        
    Now, we take vertices $s,t\in \{1, \ldots ,n\}$ such that $\displaystyle x_s= \max_{1 \leq i \leq n}\{x_i\}$ and $\displaystyle x_t=\min_{1 \leq i \leq n}\{x_i\}$. As $G$ is irregular, $t\neq s$. Let $s=i_0,i_1,\ldots, i_{k-1},i_k=t$ are consecutive vertices of a shortest path from $s$ to $t$ in $G$. So, from the Cauchy–Schwarz inequality we have
    $$\sum_{j=0}^{k-1}(x_{i_j} - x_{i_{j+1}})^2 \geq \dfrac{1}{k}\left( \sum_{j=0}^{k-1}(x_{i_j} - x_{i_{j+1}})\right)^2 = \dfrac{1}{k}(x_s - x_t)^2 \geq \dfrac{1}{diam(G)}(x_s - x_t)^2.$$

    So,
    {\small
    \begin{align}
        (1-\alpha)\Delta - \lambda_1(A_\alpha(G)) &\geq - \alpha x^TD(G)x + (1-\alpha)\displaystyle\left[ \dfrac{1}{diam(G)}(x_s - x_t)^2 + (n\Delta - 2m)x_t^2\right] \nonumber\\
        &\geq -\alpha\Delta + (1-\alpha)\displaystyle\left[ \dfrac{1}{diam(G)}(x_s - x_t)^2 + (n\Delta - 2m)x_t^2\right], \label{eq::eq2_teo3.1}
    \end{align}}
    where the right-hand side is a quadratic function of $x_t$ that attains its minimum when
    \begin{equation}\label{eq::eq3_teo3.1}
        x_t = \dfrac{x_s}{diam(G)(n\Delta - 2m)+1}.
    \end{equation}
    From \eqref{eq::eq2_teo3.1} and \eqref{eq::eq3_teo3.1}, we have $$(1-\alpha)\Delta - \lambda_1(A_\alpha(G)) \geq -\alpha\Delta +(1-\alpha)\dfrac{(n\Delta -2m)x_s^2}{diam(G)(n\Delta -2m) + 1}.$$
    As $x^Tx = 1$, we have that $x_s^2 > \dfrac{1}{n}$ and the result follows.
\end{proof}

Based on the upper bound obtained by Stevanovic in (\cite{dragan2004101016}) for $\lambda_1(A(G))$, we obtain an upper bound for $\lambda_1(A_\alpha(G))$ in function of the number of vertices, the maximum degree of the graph and $\alpha$, as we can see in Theorem \ref{theo::theorem2}.

\begin{theorem} \label{theo::theorem2}
     Let $G$ be a connected irregular graph with $n$ vertices, $m$ edges, maximum degree $\Delta$ and $\alpha \in [0,1].$ Then, 
     \begin{equation}
         \lambda_1(A_\alpha(G)) < \Delta - \dfrac{1-\alpha}{2n(n\Delta-1)\Delta^2}.
     \end{equation}
\end{theorem}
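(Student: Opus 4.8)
The plan is to reduce the statement, exactly as in the proof of Theorem~\ref{prop::cioaba}, to bounding from below the nonnegative number $\Delta-\lambda_1(A_\alpha(G))$, and then to convert the irregularity of $G$ into an explicit, diameter-free gap. For $\alpha\in[0,1)$ I take $x$ to be the unit positive Perron eigenvector of $A_\alpha(G)$ associated with $\lambda_1(A_\alpha(G))$, which exists by Proposition~\ref{prop::Perron_alpha}(a) (the case $\alpha=1$ is degenerate, since then $A_1(G)=D(G)$ and both sides equal $\Delta$). Following the computation leading to \eqref{eq::eq1_teo3.1} together with the estimate $x^TD(G)x=\sum_i d_ix_i^2\le\Delta$, valid because $x$ is a unit vector and $d_i\le\Delta$, I obtain in the same way that produced \eqref{eq::eq2_teo3.1}
\[
\Delta-\lambda_1(A_\alpha(G))\ \ge\ (1-\alpha)\left[\sum_{v_i\sim v_j}(x_i-x_j)^2+\sum_{i=1}^n(\Delta-d_i)x_i^2\right].
\]
Thus everything reduces to bounding the bracket below by a quantity exceeding $\tfrac{1}{2n(n\Delta-1)\Delta^2}$.

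Next I localize the degree deficiency. Let $s$ be a vertex with $x_s=\max_i x_i$; since $x$ is a unit vector, $x_s^2\ge 1/n$. Because $G$ is connected and irregular, there is a vertex $w$ with integer degree $d_w\le\Delta-1$, so $\Delta-d_w\ge 1$; if $w=s$ the bracket already exceeds $(\Delta-d_s)x_s^2\ge x_s^2\ge 1/n$ and we are done. Otherwise fix a shortest path $s=u_0,u_1,\dots,u_k=w$ with $1\le k\le n-1$. The degree-deficiency sum is at least $(\Delta-d_w)x_w^2\ge x_w^2$, while the Cauchy--Schwarz inequality applied along the path (precisely as in Theorem~\ref{prop::cioaba}) gives $\sum_{v_i\sim v_j}(x_i-x_j)^2\ge \tfrac1k(x_s-x_w)^2\ge\tfrac{1}{n-1}(x_s-x_w)^2$. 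Hence the bracket is at least $\tfrac{1}{n-1}(x_s-x_w)^2+x_w^2$.

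The decisive step is then a one-variable minimization: regarding $\tfrac{1}{n-1}(x_s-t)^2+t^2$ as a convex quadratic in $t$ and minimizing over $t\in\real$ (the minimizer $t=x_s/n$ is admissible) yields the clean lower bound $\tfrac{x_s^2}{n}\ge\tfrac{1}{n^2}$. Therefore $\Delta-\lambda_1(A_\alpha(G))\ge\tfrac{1-\alpha}{n^2}$. This is in fact \emph{stronger} than the claimed inequality: since $G$ is connected and irregular we have $\Delta\ge 2$, so $2(n\Delta-1)\Delta^2>n$, equivalently $2n(n\Delta-1)\Delta^2>n^2$, and consequently $\tfrac{1-\alpha}{n^2}>\tfrac{1-\alpha}{2n(n\Delta-1)\Delta^2}$, which gives the stated strict bound.

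The main obstacle, and the only place where irregularity is genuinely used, is turning the purely qualitative fact ``some vertex has degree below $\Delta$'' into a quantitative gap that does not reference the diameter; the device above (isolate the deficient vertex, join it to the maximal-entry vertex by a shortest path, and balance the edge-difference term against the degree-deficiency term via the quadratic minimization) is what accomplishes this. I note that this route proves more than required and never produces the factor $\Delta^2$. To recover Stevanovi\'c's bound in exactly the form $\Delta-\tfrac{1-\alpha}{2n(n\Delta-1)\Delta^2}$ one would instead follow his local estimate, bounding the entry $x_w$ at the deficient vertex by propagating the eigenvalue equation $\lambda_1(A_\alpha(G))\,x_i=\alpha d_ix_i+(1-\alpha)\sum_{j\sim i}x_j$ and invoking $\lambda_1(A_\alpha(G))\le\Delta$ from Proposition~\ref{prop::bound_eigenvalues_nikiforov}, which is where the powers of $\Delta$ enter.
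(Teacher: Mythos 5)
Your proof is correct, but it takes a genuinely different route from the paper's. The paper transplants Stevanovi\'c's original argument: it discards the degree-deficiency term entirely (using irregularity only to make the first inequality strict), applies Cauchy--Schwarz over all $m\le\frac{n\Delta-1}{2}$ edges to get $\Delta-\lambda_1(A_\alpha(G))\ge\frac{2(1-\alpha)}{n\Delta-1}(x_u-x_v)^2$ for the extreme entries $x_u,x_v$, and then runs a three-case analysis on a threshold $c$, where the decisive Case~(iii) feeds the eigenvalue equation at a vertex of degree at most $\Delta-1$ through Nikiforov's bound $\lambda_1(A_\alpha(G))\le\Delta$; the choice $c=\frac{1}{2\Delta\sqrt{n}}$ is exactly where the factors $\Delta^2$ and $n\Delta-1$ in the stated constant come from. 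You instead reuse the decomposition of Theorem~\ref{prop::cioaba}: keep the term $\sum_i(\Delta-d_i)x_i^2$, run the path Cauchy--Schwarz from the maximal-entry vertex $s$ to a degree-deficient vertex $w$ (bounding the path length by $n-1$ rather than by the diameter), and balance the edge term against the deficiency term via the quadratic minimization $\min_{t}\left[\frac{(x_s-t)^2}{n-1}+t^2\right]=\frac{x_s^2}{n}$. All your steps check out: the inequality $\Delta-\lambda_1(A_\alpha(G))\ge(1-\alpha)\bigl[\sum_{v_i\sim v_j}(x_i-x_j)^2+\sum_i(\Delta-d_i)x_i^2\bigr]$ does follow from \eqref{eq::eq1_teo3.1} together with $x^TD(G)x\le\Delta$, the Perron vector supplies $x_s^2\ge 1/n$, and the comparison $2n(n\Delta-1)\Delta^2>n^2$ (valid since $\Delta\ge2$ for a connected irregular graph) converts your stronger, diameter-free conclusion $\Delta-\lambda_1(A_\alpha(G))\ge\frac{1-\alpha}{n^2}$ into the stated strict bound. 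What each approach buys: the paper's argument reproduces Stevanovi\'c's constant verbatim as a genuine $A_\alpha$-generalization, while yours is shorter, avoids the case analysis, and proves a strictly better bound, though it only implies (rather than re-derives) the advertised expression. One caveat applies to both proofs equally: at $\alpha=1$ the statement itself fails ($\lambda_1(A_1(G))=\Delta$ equals the right-hand side), and both arguments tacitly need $\alpha<1$ because they rely on a positive unit eigenvector, so your explicit flagging of that endpoint is appropriate rather than a defect.
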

\begin{proof}
    Let \(x\) be a positive unit eigenvector of \(A_\alpha(G)\) associated to $\lambda_1(A_\alpha(G))$. From Proposition~\ref{prop:rayleigh_alpha} we have that
    \begin{align*}
        \lambda_1(A_\alpha(G)) &= x^TA_\alpha(G)x \\
        &=\alpha x^TD(G)x + (1-\alpha)x^TA(G)x\\
        &=\alpha \displaystyle \sum_{i=1}^nd_ix_i^2 + 2(1-\alpha)\displaystyle \sum_{v_i \sim v_j}x_ix_j.
    \end{align*}
    Since the maximum degree of \(G\) is \(\Delta\) and \(G\) is irregular, we have that
    \begin{align} \label{theo::ineq1_theo3.2}
        \Delta - \lambda_1(A_\alpha(G)) &> \displaystyle \sum_{i=1}^nd_ix_i^2 - \alpha \displaystyle \sum_{i=1}^nd_ix_i^2 - 2(1-\alpha)\displaystyle \sum_{v_i \sim v_j}x_ix_j \nonumber \\
        &=(1-\alpha)\displaystyle \left(\sum_{i=1}^nd_ix_i^2 - 2\sum_{v_i \sim v_j}x_ix_j \right) \nonumber \\
        &=(1-\alpha)\displaystyle \sum_{v_i \sim v_j}(x_i-x_j)^2. 
    \end{align}
    As $m =   \dfrac{\displaystyle \sum_{i=1}^nd_i}{2} \leq \dfrac{n\Delta - 1}{2},$ from Cauchy–Schwarz inequality we have that
    \begin{equation} \label{eq::ineq2_theo3.2}
        \displaystyle \sum_{v_i \sim v_j}(x_i-x_j)^2 \geq \dfrac{1}{m}\left( \displaystyle \sum_{v_i \sim v_j}\vert x_i-x_j \vert \right)^2 \geq \dfrac{2}{n\Delta - 1}\left( \displaystyle \sum_{v_i \sim v_j}\vert x_i-x_j \vert \right)^2.
    \end{equation}
    Let \(u\) and \(v\) be the vertices of \(G\) such that \(x_u = \displaystyle \max_{1 \leq i \leq n} x_i\) and \(x_v = \displaystyle \min_{1 \leq i \leq n} x_i\) and \(u = w_0, w_1, \ldots, w_k = v\) be a path between \(u\) and \(v\) in \(G\). So
    \begin{equation} \label{eq::ineq3_theo3.2}
        \sum_{v_i \sim v_j} |x_i - x_j| \geq \sum_{l=0}^{k-1} |x_{w_l} - x_{w_{l+1}}| \geq \sum_{l=0}^{k-1} (x_{w_l} - x_{w_{l+1}}) = x_{w_0} - x_{w_k} = x_u - x_v.
    \end{equation}
    From inequalities \eqref{theo::ineq1_theo3.2}, \eqref{eq::ineq2_theo3.2} and \eqref{eq::ineq3_theo3.2} follows that $$\Delta - \lambda_1(A_\alpha(G)) \geq \dfrac{2(1-\alpha)}{n\Delta - 1} (x_u - x_v)^2.$$
    
    \noindent Applying the same techinique used by Stevanovic in \cite{dragan2004101016} we can estimate $x_u - x_v$. Since \( \displaystyle \sum_{i = 1}^n x^2_i = 1 \), we always have that \( x_u > \dfrac{1}{\sqrt{n}} \) and \( x_v < \dfrac{1}{\sqrt{n}} \). Let $c > 0$ be a positive number. So, there are three cases to consider:
    
    \noindent \textit{Case (i)}: \( x_u \geq \dfrac{1}{\sqrt{n}} + c \). Then \( x_v < \dfrac{1}{\sqrt{n}} \) and \( \Delta - \lambda_1(A_\alpha(G)) > \dfrac{2c^2(1-\alpha)}{n\Delta - 1} \).

     \noindent \textit{Case (ii)}: \( x_v \leq \dfrac{1}{\sqrt{n}} - c \). Then \( x_u > \dfrac{1}{\sqrt{n}} \) and again \( \Delta - \lambda_1(A_\alpha(G)) > \dfrac{2c^2(1-\alpha)}{n\Delta - 1} \).

      \noindent \textit{Case (iii)}: \( \dfrac{1}{\sqrt{n}} - c < x_v <  x_u < \dfrac{1}{\sqrt{n}} + c \). Then \( x_i \in \left(\frac{1}{\sqrt{n}} - c, \frac{1}{\sqrt{n}} + c\right) \) $\forall 1 \leq i \leq n$. As $G$ is an irregular graph, there is \( v_s \in V \) such that \( d_s \leq \Delta - 1 \). So, 
      
      \begin{align*}
           \lambda_1(A_\alpha(G)) \left(\dfrac{1}{\sqrt{n}} - c\right) & < \lambda_1(A_\alpha(G))x_s = \displaystyle \alpha d_s x_s + (1-\alpha)\sum_{v_t \sim v_s} x_t\\
           & <  \alpha (\Delta -1)\left(\dfrac{1}{\sqrt{n}} + c\right) + (1-\alpha)(\Delta - 1)\left(\dfrac{1}{\sqrt{n}} + c\right) \\
           & = \left( \Delta - 1 \right) \left(\dfrac{1}{\sqrt{n}} + c\right).
      \end{align*}
      
      Then,
      \[\lambda_1(A_\alpha(G)) < (\Delta -1)\dfrac{1+c\sqrt{n}}{1-c\sqrt{n}}.\]
      Since Nikiforov, in \cite{VN17}, proved that $\lambda_1(A_\alpha(G)) \leq \Delta$ and in order for the expression on the right-hand side to be useful, it must be less than \( \Delta \), then it is satisfied for $c < \dfrac{1}{(2\Delta-1)\sqrt{n}}.$ Making \( c = \dfrac{1}{2\Delta\sqrt{n}} \), in cases (i) and (ii) we get that \( \lambda_1(A_\alpha(G)) \leq \Delta - \dfrac{1 - \alpha}{2n(n\Delta - 1)\Delta^2} \), while in case (iii) we get that \( \lambda_1(A_\alpha(G)) \leq \Delta - \dfrac{1}{2\Delta - 1} < \Delta -\dfrac{1 - \alpha}{2n(n\Delta - 1)\Delta^2}\).
\end{proof}

Let \( H_{n-1, \Delta_2} \) denote a connected graph with maximum degree \( n-1 \) and a second maximum degree \( \Delta_2 \), where \( \Delta_2 = \delta(G) \) and \( \Delta_2 < n-1 \). Consequently, \( H_{n-1, \Delta_2} \) is a semiregular graph. From Theorem~\ref{theo::equitable_eigenvalues_relation} can easily see that the largest eigenvalue of \( A_{\alpha}(H_{n-1, \Delta_2}) \) is given by the largest root of the following equation:

\[
\lambda^2 + (-\alpha n - \Delta_2 + 1)\lambda + (n-1)(\alpha \Delta_2 + \alpha -1) = 0.
\]

So, $\lambda_1(A_{\alpha}(H_{n-1, \Delta_2}))$ is given by

\begin{equation}\label{eq::largest_H}
    \dfrac{\alpha n + \Delta_2 - 1 +\sqrt{\alpha^2 n^2 + (\Delta_2 - 1)^2 -2\alpha((n-2)\Delta_2 + 3n-2) + 4(n-1)}}{2}.
\end{equation}

The next theorem presents an upper bound for $\lambda_1(A_\alpha(G))$ based on the upper bound for  $\lambda_1(A(G))$, presented by K. Das in \cite{DAS20112420}. 

\begin{theorem}
    Let $G$ be a connected graph with maximum degree $\Delta$, second maximum degree $\Delta_2$ and $\alpha \in [0,1].$ Then,
    {\footnotesize{
    \begin{equation}\label{eq::theo3.3}
      \lambda_1(A_\alpha(G)) \leq \dfrac{\alpha(\Delta + 1)+ \Delta_2 - 1 + \sqrt{\alpha^2(\Delta + 1)^2 + (\Delta_2 - 1)^2 -2\alpha((\Delta-1)\Delta_2 + 3\Delta + 1) + 4\Delta}}{2}
    \end{equation}}}
    The equality in the above inequality holds if and only if $G$ is a $\Delta-$regular graph or $G \cong H_{n-1,\Delta_2}.$
\end{theorem}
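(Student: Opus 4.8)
The plan is to adapt the approach underlying Das's adjacency bound to the $A_\alpha$ setting, using the Perron eigenvector supplied by Proposition~\ref{prop::Perron_alpha}. Let $\lambda = \lambda_1(A_\alpha(G))$ and let $x$ be the positive unit eigenvector associated with $\lambda$. I would choose a vertex $v_p$ with $x_p = \max_{1\le i \le n} x_i$ and a vertex $v_q$ with $x_q = \max_{i \ne p} x_i$, so that $x_p \ge x_q \ge x_i$ for every $i$. Reading the eigenvalue equation $A_\alpha(G)x = \lambda x$ at coordinate $k$ gives $\lambda x_k = \alpha d_k x_k + (1-\alpha)\sum_{v_j \sim v_k} x_j$; the idea is to apply this at $k=p$ and $k=q$, bound each right-hand side by replacing degrees with $\Delta$ or $\Delta_2$ and neighbour entries with $x_p$ or $x_q$, obtaining two linear inequalities in $x_p,x_q$. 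Multiplying them eliminates $x_p,x_q$ and leaves a quadratic inequality in $\lambda$ whose dominant root is exactly the claimed bound.

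First I would treat the equation at $v_p$: since $d_p \le \Delta$ and every neighbour satisfies $x_j \le x_q$, one gets $\lambda x_p \le d_p[\alpha x_p + (1-\alpha)x_q] \le \Delta[\alpha x_p + (1-\alpha)x_q]$, i.e. $(\lambda - \alpha\Delta)x_p \le (1-\alpha)\Delta\, x_q$. At $v_q$ I would use $\sum_{v_j\sim v_q} x_j \le x_p + (d_q - 1)x_q$ (valid whether or not $v_p \sim v_q$, since $x_q \le x_p$) together with $d_q \le \Delta_2$, obtaining $(\lambda - \Delta_2 + 1 - \alpha)x_q \le (1-\alpha)x_p$. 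Assuming $\lambda$ exceeds $\alpha\Delta$ and $\Delta_2 - 1 + \alpha$ (which hold in the relevant range and are verified directly, since otherwise the bound is immediate), both left-hand sides are positive; multiplying and cancelling the positive factor $x_p x_q$ yields
\[
(\lambda - \alpha\Delta)(\lambda - \Delta_2 + 1 - \alpha) \le (1-\alpha)^2 \Delta .
\]
Expanding gives $\lambda^2 - [\alpha(\Delta+1) + \Delta_2 - 1]\lambda + \Delta(\alpha\Delta_2 + \alpha - 1) \le 0$, which is precisely the quadratic defining \eqref{eq::largest_H} with $n$ replaced by $\Delta+1$. Hence $\lambda$ is at most the larger root of that quadratic, which is the right-hand side of \eqref{eq::theo3.3}.

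The step I expect to be the main obstacle is the justification of $d_q \le \Delta_2$ at the vertex $v_q$ carrying the second largest eigenvector entry: a priori $v_q$ could itself have degree $\Delta$, in which case the inequality at $v_q$ only yields $\lambda \le \Delta$ rather than the sharper bound. I would resolve this by a case split. If $G$ is regular, then $\Delta_2 = \Delta$ and Lemma~\ref{lemma::eigeneq_RegularGraphs} gives $\lambda = \Delta$, which one checks equals the right-hand side of \eqref{eq::theo3.3}, so the inequality holds with equality. If $G$ is irregular, then $\Delta_2 < \Delta$ and every vertex has degree either $\Delta$ or at most $\Delta_2$; the delicate point is to show that the pair $(v_p,v_q)$ can be chosen so that the vertex whose entry multiplies the degree in the second equation is one of degree at most $\Delta_2$, and this is exactly where the argument must be made carefully.

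Finally, for the equality characterisation I would trace back each inequality. Equality forces $d_p = \Delta$ with all neighbours of $v_p$ sharing the common entry $x_q$, and $d_q = \Delta_2$ with $v_p \sim v_q$ and all remaining neighbours of $v_q$ also having entry $x_q$. Propagating these conditions through the connectedness of $G$ forces either all entries to coincide, whence $G$ is $\Delta$-regular, or $v_p$ to be adjacent to every other vertex with all of them sharing degree $\Delta_2$ and entry $x_q$, whence $\Delta = n-1$ and $G \cong H_{n-1,\Delta_2}$, matching the stated equality cases.
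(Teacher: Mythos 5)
Your core computation is the same as the paper's: you take the two largest eigenvector entries $x_p \ge x_q$, write the eigenvalue equation at both vertices, bound the neighbour sums to get $(\lambda-\alpha\Delta)x_p \le (1-\alpha)\Delta x_q$ and $(\lambda-\Delta_2+1-\alpha)x_q \le (1-\alpha)x_p$, and multiply to reach exactly the quadratic $\lambda^2 -[\alpha(\Delta+1)+\Delta_2-1]\lambda + \Delta(\alpha\Delta_2+\alpha-1)\le 0$ that appears in the paper's Case (ii). However, the step you yourself flag as ``the main obstacle'' --- justifying $d_q\le\Delta_2$ --- is left unresolved, and it is a genuine gap, not a presentational one. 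Writing that ``this is exactly where the argument must be made carefully'' is not an argument. The problematic configuration is concrete: if $G$ has two or more vertices of maximum degree, nothing prevents both $x_p$ and $x_q$ from sitting on degree-$\Delta$ vertices (in the double star --- two adjacent centres of degree $4$, each with three pendant neighbours --- this is precisely what happens), and then your inequality at $v_q$ is simply unavailable; the observation that every vertex of an irregular graph has degree $\Delta$ or at most $\Delta_2$ does not tell you which alternative $v_q$ falls into.

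For comparison, the paper closes this hole with a case split your proposal lacks: its Case (i) assumes $d_j=\Delta$ \emph{and} $d_i\le\Delta_2$, in which case the eigen-equation at the top vertex alone yields $\lambda_1(A_\alpha(G))\le d_i\le\Delta_2$, which is strictly below the bound; its Case (ii) is your computation. This split is exhaustive precisely when at most one vertex attains degree $\Delta$, which is automatic if $\Delta_2$ is read as the second largest degree \emph{counted with multiplicity} (the $d_2$ of Das's adjacency bound that this theorem extends), since then $\Delta\neq\Delta_2$ forces the maximum-degree vertex to be unique, so either $d_q\le\Delta_2$ or $v_q$ is that unique vertex and $d_p\le\Delta_2$. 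Under the paper's own stated definition of $\Delta_2$ (second largest \emph{distinct} degree value), the case $d_p=d_q=\Delta$ is genuinely uncovered --- by the paper as well --- and there the inequality \eqref{eq::theo3.3} actually fails: for the double star with $\alpha=0$ one has $\Delta=4$, $\Delta_2=1$, so the claimed bound equals $2$, while $\lambda_1(A(G))=(1+\sqrt{13})/2>2$. So your gap cannot be patched as stated; completing the argument requires both interpreting $\Delta_2$ as $d_2$ and adding the missing case along the paper's Case (i) lines, after which your equality analysis goes through essentially as the paper's does.
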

\begin{proof}
 From Theorem~\ref{theo::bounds_adjacency}, Proposition~\ref{prop::bound_eigenvalues_nikiforov} and Proposition~\ref{prop::DS_Aalpha}, it is well-known that $\lambda_1(A_\alpha(G)) \leq \Delta$, with equality if and only if $G$ is $\Delta-$regular. If \(\Delta = \Delta_2\), then
    \begin{equation*}
        \lambda_1(A_\alpha(G)) \leq \Delta = \dfrac{\alpha(\Delta+1) + \Delta -1 + \sqrt{(\Delta+1)^2(1-\alpha)^2}}{2}
    \end{equation*}
    and the result follows. 
    
    Now, consider $\Delta \neq \Delta_2.$ Let $x = (x_1, x_2, \ldots, x_n)^T$ be an eigenvector of $A_\alpha(G)$ corresponding to the eigenvalue $\lambda_1(A_\alpha(G))$. Without loss of generality, we can assume that exist $i$ such that $x_i = 1$ and the other components are less than or equal to 1. Let $\displaystyle x_j = \max_{\substack{1 \leq k \leq n \\ k \neq i}} x_k.$ We need to consider two cases:

    \noindent \textit{Case (i):} $d_j = \Delta$ and $d_i \leq \Delta_2.$ So,
    \begin{equation*}
        \displaystyle \lambda_1(A_\alpha(G))x_i = \alpha d_ix_i + (1-\alpha)\sum_{v_i \sim v_k}x_k.
    \end{equation*}
    As $x_i = 1$ and $x_j < 1,$ we have 
    \begin{align*}
        \displaystyle \lambda_1(A_\alpha(G)) & = \alpha d_i + (1-\alpha)\sum_{v_i \sim v_k}x_k\\
        & \leq \alpha d_i + (1-\alpha) d_i x_j\\
        & \leq \alpha d_i + (1-\alpha) d_i\\
        & \leq \Delta_2.
    \end{align*}
    Moreover, we can see that
    \begin{align*}
        \Delta_2 & \leq \dfrac{\Delta_2 - 1 + \sqrt{(\Delta_2 - 1)^2 + 4\Delta}}{2}\\
        & \leq \dfrac{\alpha(\Delta + 1)+ \Delta_2 - 1 + \sqrt{\alpha^2(\Delta + 1)^2 + (\Delta_2 - 1)^2 -2\alpha((\Delta-1)\Delta_2 + 3\Delta + 1) + 4\Delta}}{2}
    \end{align*}
    with equality if  and only if $\Delta = \Delta_2.$ Since $\Delta \neq \Delta_2,$ we get
    {\footnotesize{
    \begin{equation*}
        \lambda_1(A_\alpha(G)) \leq \Delta_2 < \dfrac{\alpha(\Delta + 1)+ \Delta_2 - 1 + \sqrt{\alpha^2(\Delta + 1)^2 + (\Delta_2 - 1)^2 -2\alpha((\Delta-1)\Delta_2 + 3\Delta + 1) + 4\Delta}}{2}.
    \end{equation*}}}
    
    \noindent \textit{Case (ii):} $d_j \leq \Delta_2.$ So,
    \begin{equation*}
        \displaystyle \lambda_1(A_\alpha(G))x_i = \alpha d_i x_i + (1-\alpha)\sum_{v_i \sim v_k}x_k.
    \end{equation*}
    As $x_i = 1$ and $d_i \leq \Delta$, follows
    \begin{equation}\label{eq::eq1_theo3.3}
        \lambda_1(A_\alpha(G)) \leq \alpha d_i + (1-\alpha)d_ix_j \leq \alpha \Delta + (1-\alpha)\Delta x_j. 
    \end{equation}
    Moreover, we have
    \begin{align*}
        \displaystyle \lambda_1(A_\alpha(G))x_j & = \alpha d_j x_j + (1-\alpha)\sum_{v_j \sim v_k} x_k\\
        & \leq \alpha d_j x_j + (1-\alpha)(1 + (d_j-1)x_j)\\
        & \leq \alpha \Delta_2 x_j + (1-\alpha)(1 + (\Delta_2-1)x_j).
    \end{align*}
    So,
    \begin{equation*}
        (\lambda_1(A_\alpha(G)) -\alpha \Delta_2 - (1-\alpha)(\Delta_2-1))x_j \leq 1-\alpha,
    \end{equation*}
   and consequently
    \begin{equation} \label{eq::eq2_theo3.3}
        (\lambda_1(A_\alpha(G)) -\Delta_2  + 1-\alpha)x_j \leq 1-\alpha.
    \end{equation}
    
    From \eqref{eq::eq1_theo3.3} and \eqref{eq::eq2_theo3.3} follows that
    \begin{align*}
        \lambda_1(A_\alpha(G))(\lambda_1(A_\alpha(G)) -\Delta_2 + 1-\alpha) &\leq \alpha\Delta(\lambda_1(A_\alpha(G)) -\Delta_2 + 1-\alpha) + \\ 
        & \qquad (1-\alpha)\Delta(\lambda_1(A_\alpha(G)) -\Delta_2 + 1-\alpha)x_j \\
        & \leq \alpha\Delta(\lambda_1(A_\alpha(G)) -\Delta_2 + 1-\alpha) + \\
        &\qquad (1-\alpha)^2\Delta.
    \end{align*}
    So, 
    \begin{equation*}
        \lambda_1^2(A_\alpha(G)) + (-\Delta_2 + 1- \alpha -\alpha\Delta)\lambda_1(A_\alpha(G)) + \alpha\Delta \Delta_2 + (\alpha-1)\Delta \leq 0.
    \end{equation*}
    Solving this inequality, the result follows.

    If $G$ is $\Delta-$regular graph, from Lemma~\ref{lemma::eigeneq_RegularGraphs} we have that $\lambda_1(A_\alpha(G))=\Delta$ and making algebraic manipulations we conclude the equality occurrence. If $G \cong H_{n-1,\Delta_2}$ we have that the largest eigenvalue of $G$ is given by \eqref{eq::largest_H}. So, replacing the $G$ details and making some algebraic manipulation we conclude the equality.
    
    Now, suppose that equality holds. If $\Delta_2 = \Delta$, then $G$ is isomorphic to a regular graph. Otherwise, if $\Delta \neq \Delta_2$, all the inequalities in the above argument in Case (ii) must be equalities. From inequality \eqref{eq::eq1_theo3.3}, we obtain that $x_k = x_j$ for all $v_k \in N(v_i)$ and $d_i = \Delta$. Additionally, from \eqref{eq::eq2_theo3.3}, we get $x_k = x_j$ for all $v_k \in N(v_j)$, with $k \neq i$ and $d_j = \Delta_2$. Since $d_i = \Delta$, it follows that $v_iv_j \in E(G)$. Let $V_1 = \{v_k: x_k = x_j\}$. If $V_1 \neq V(G) \backslash \{v_i\}$ and as $G$ is connected, then there exist vertices $v_p \in V_1$ and $v_q \notin V_1$, with $q \neq i$, such that $v_pv_q \in E(G)$ and consequently $x_q < x_j.$  In this case, we have $\lambda_1(A_\alpha(G))x_q < \lambda_1(A_\alpha(G))x_p = \lambda_1(A_\alpha(G))x_j < \alpha \Delta_2 x_j + (1-\alpha)(1 + (\Delta_2-1)x_j)$, which leads to a contradiction. Therefore, $V_1 = V(G) \backslash \{v_i\}$. If $x_j = 1$, then $\lambda_1(A_\alpha(G)) = \Delta = \Delta_2$, which is a contradiction since $\Delta \neq \Delta_2.$ Then $x_j < 1$. Suppose that exist a vertex \(v_k \in V_1\) that is not adjacent to the vertex \(v_i\). Since \(d_k \leq \Delta_2\) and \(x_j < 1,\) we have that $\lambda_1(A_\alpha(G))x_k = \lambda_1(A_\alpha(G))x_j < \alpha \Delta_2 x_j + (1-\alpha)(1 + (\Delta_2-1)x_j)$ which is a contradiction. Thus, we conclude that each vertex \(v_k \in V_1\) is adjacent to vertex \(v_i\) and \(d_k = \Delta_2 = \delta < n - 1\). Hence, $G \cong H_{n-1, \Delta_2}$ concluding the result.
\end{proof}

In the next theorem, we present an upper bound for $\lambda_1(A_\alpha(G))$ whose proof uses the first Zagreb index, the sum of the eigenvalues and the sum of the squares of the eigenvalues associated to $A_\alpha(G)$.

\begin{theorem} \label{theo::largest_upper_with_zagrab}
    Let $G$ be a graph with $n$ vertices, $m$ edges, $Z_1(G) = Z_1$ and $\alpha \in [0,1]$. Then, 
    \begin{equation}\label{eq::upperbound4}
        \lambda_1(A_\alpha(G)) \leq \dfrac{2 \alpha m + \sqrt{(n - 1) \left( \alpha^2 \left( -4 m^2 + 2 m n + n Z_1 \right) - 4 \alpha m n + 2 m n \right)}}{n}.
    \end{equation}
\end{theorem}
If $G \simeq K_n$, then the equality occurs.
\begin{proof}
    From Lemma~\ref{lemma::sum_eigenvalues}, item \textit{(i),} we have
    \begin{align*}
        2m\alpha - \lambda_1(A_\alpha(G)) & = \displaystyle  \sum_{i=2}^n \lambda_i(A_\alpha(G)) \\
        & \leq \displaystyle  \sum_{i=2}^n \vert\lambda_i(A_\alpha(G))\vert.
    \end{align*}
    
    From Lemma~\ref{lemma::sum_eigenvalues}, item \textit{(ii),} and the Cauchy–Schwarz inequality we obtain
    \begin{align*}
        \alpha^2Z_1 + (1-\alpha)^22m - \lambda_1^2(A_\alpha(G)) & = \displaystyle \sum_{i=2}^n\lambda_i^2(A_\alpha(G)) \\
        & \geq \dfrac{1}{n-1} \left( \displaystyle \sum_{i=2}^n \vert \lambda_i(A_\alpha(G)) \vert \right)^2\\
        & \geq \dfrac{(2m\alpha - \lambda_1(A_\alpha(G)))^2}{n-1}
    \end{align*}
    Hence,
    $$\alpha^2Z_1 + (1-\alpha)^22m - \lambda_1^2(A_\alpha(G)) - \dfrac{(2m\alpha - \lambda_1(A_\alpha(G)))^2}{n-1} \geq 0.$$
    and therefore,
    $$\lambda_1(A_\alpha(G)) \leq\dfrac{2 \alpha m + \sqrt{(n - 1) \left( \alpha^2 \left( -4 m^2 + 2 m n + n Z_1 \right) - 4 \alpha m n + 2 m n \right)}}{n}.$$

    If $G \simeq K_n$, then $m = \dfrac{n(n-1)}{2}$, $Z_1(G) = n(n-1)^2$ and from Proposition \ref{prop::complete_graph_spectrum} $\lambda_1(A_\alpha(K_n)) = n-1$, so the equality occurs. 
\end{proof}

\begin{corollary}
    Let $G$ be a graph with $n$ vertices, $m$ edges and $\alpha \in [0,1]$. Then,
    \small{
    \begin{equation}
        \lambda_1(A_\alpha(G)) \leq \dfrac{2 \alpha m + \sqrt{(n - 1) \left( - 4 \alpha^2 m^2 + 2 m n\left((n+\delta)\alpha^2 - 2\alpha +1 \right) -n^2\alpha^2(n-1)\delta\right)}}{n}
    \end{equation}}
\end{corollary}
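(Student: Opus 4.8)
The plan is to obtain this corollary directly from Theorem~\ref{theo::largest_upper_with_zagrab} by substituting a suitable upper bound for the first Zagreb index $Z_1(G)$ into the bound \eqref{eq::upperbound4}. The key structural observation is that $Z_1$ appears in \eqref{eq::upperbound4} only inside the radicand, through the term $(n-1)\alpha^2 n Z_1$, whose coefficient $(n-1)\alpha^2 n$ is nonnegative for every $\alpha\in[0,1]$ and $n\geq 1$. Hence replacing $Z_1$ by any upper bound can only increase (or leave unchanged) the right-hand side of \eqref{eq::upperbound4}, so the inequality is preserved under such a substitution. This monotonicity is what makes the whole argument work, and checking the sign of that coefficient is the only genuinely delicate point.

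First I would invoke Theorem~\ref{theo::sharp_DAS_upper}, which gives $Z_1(G) \leq 2mn - n(n-1)\delta + 2m(\delta-1)$. Multiplying through by $n$ yields $nZ_1 \leq 2mn^2 - n^2(n-1)\delta + 2mn(\delta-1)$, and I substitute this into the radicand $\alpha^2(-4m^2 + 2mn + nZ_1) - 4\alpha mn + 2mn$ appearing in \eqref{eq::upperbound4}.

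The remaining work is routine algebraic bookkeeping. After substitution, the coefficient $-4m^2 + 2mn + nZ_1$ of $\alpha^2$ collapses to $-4m^2 + 2mn(n+\delta) - n^2(n-1)\delta$: the original $2mn$ cancels against the $-2mn$ coming from $2mn(\delta-1)$, while the $2mn^2$ and $2mn\delta$ terms from $nZ_1$ combine into $2mn(n+\delta)$. Regrouping the pieces $2mn(n+\delta)\alpha^2 - 4\alpha mn + 2mn$ as $2mn\bigl((n+\delta)\alpha^2 - 2\alpha + 1\bigr)$ and keeping $-4\alpha^2 m^2$ and $-n^2\alpha^2(n-1)\delta$ separate reproduces exactly the radicand displayed in the statement. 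Since this is a direct substitution followed by elementary rearrangement, I anticipate no real obstacle beyond careful algebra; the only conceptual step is the monotone replacement justified in the first paragraph.
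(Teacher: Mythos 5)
Your proposal is correct and is exactly the paper's argument: the paper's proof consists of the single line ``From Theorem~\ref{theo::largest_upper_with_zagrab} and Theorem~\ref{theo::sharp_DAS_upper} the result follows,'' and you have simply filled in the substitution and algebra, including the (correct) observation that the nonnegative coefficient $(n-1)\alpha^2 n$ of $Z_1$ in the radicand is what justifies replacing $Z_1$ by its upper bound.
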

\begin{proof}
    From  Theorem~\ref{theo::largest_upper_with_zagrab} and Theorem~\ref{theo::sharp_DAS_upper} the result follows.
\end{proof}

Theorem~\ref{theo::alpha-liu2008} introduces an upper bound for $\lambda_1(A_\alpha(G))$ based on the bound for $\lambda_1(Q(G)),$ presented by Liu in \cite{Liu2008}. 

\begin{theorem}\label{theo::alpha-liu2008}
    Let $G$ be a graph on $n$ vertices with maximum degree $\Delta$, clique number $\omega$ and $\alpha \in [0,1].$ Then,
    \begin{equation}
       \lambda_1(A_\alpha(G)) \leq \alpha \Delta + (1-\alpha)\left(1 - \dfrac{1}{\omega}\right)n. 
    \end{equation}
\end{theorem}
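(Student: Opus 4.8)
The plan is to combine the Rayleigh quotient characterization of $\lambda_1(A_\alpha(G))$ from Proposition~\ref{prop:rayleigh_alpha} with the Motzkin--Straus theorem (Theorem~\ref{theo::motzkin}), by separating the contributions of the degree part $\alpha D(G)$ and the adjacency part $(1-\alpha)A(G)$ of the $A_\alpha$-matrix and bounding each one individually.

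First I would let $x$ be a unit eigenvector associated to $\lambda_1(A_\alpha(G))$. Since $A_\alpha(G)$ has nonnegative entries for every $\alpha \in [0,1]$, Proposition~\ref{prop::Perron_alpha} lets me choose $x$ to be nonnegative; the extreme case $\alpha = 1$ is immediate, as $A_1(G) = D(G)$ forces $\lambda_1(A_1(G)) = \Delta$, which already matches the right-hand side. By Proposition~\ref{prop:rayleigh_alpha},
\[
\lambda_1(A_\alpha(G)) = x^T A_\alpha(G) x = \alpha\, x^T D(G) x + (1-\alpha)\, x^T A(G) x.
\]
The degree term is controlled directly: since $\sum_{i} x_i^2 = 1$ and every vertex degree is at most $\Delta$, we get $x^T D(G) x = \sum_{i} d_i x_i^2 \leq \Delta$.

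The adjacency term is where Motzkin--Straus enters, and I expect the main obstacle to be reconciling two different normalizations. Theorem~\ref{theo::motzkin} is stated for vectors on the standard simplex ($\sum_i x_i = 1$, $x_i \geq 0$), whereas the Rayleigh quotient uses the Euclidean normalization $\sum_i x_i^2 = 1$. To bridge this I would set $s = \sum_i x_i > 0$ and pass to $y = x/s$, which lies on the simplex; Theorem~\ref{theo::motzkin} then gives $y^T A(G) y \leq 1 - \tfrac{1}{\omega}$, whence $x^T A(G) x = s^2\, y^T A(G) y \leq s^2\!\left(1-\tfrac{1}{\omega}\right)$. A single Cauchy--Schwarz estimate $s^2 = \left(\sum_i x_i\right)^2 \leq n \sum_i x_i^2 = n$ converts the $\ell_1$-mass back into the ambient dimension, yielding $x^T A(G) x \leq n\!\left(1 - \tfrac{1}{\omega}\right)$.

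Finally I would substitute both bounds into the split expression to obtain
\[
\lambda_1(A_\alpha(G)) \leq \alpha \Delta + (1-\alpha)\left(1 - \frac{1}{\omega}\right)n,
\]
which is the claimed inequality. The only delicate points are ensuring the eigenvector may be taken nonnegative so that $s>0$ and Motzkin--Straus is applicable, together with the $\ell_1$-versus-$\ell_2$ normalization handled by the Cauchy--Schwarz step; the remainder is routine substitution.
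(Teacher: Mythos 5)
Your proposal is correct and follows essentially the same route as the paper: Rayleigh quotient, splitting $A_\alpha(G)$ into $\alpha D(G)+(1-\alpha)A(G)$, bounding the degree part by $\Delta$, and applying Motzkin--Straus to the simplex-rescaled eigenvector with the Cauchy--Schwarz estimate $\left(\sum_i x_i\right)^2 \leq n$; the paper merely does the bookkeeping with the simplex-normalized vector $x = y/U$ throughout instead of rescaling only the adjacency term. If anything, you are slightly more careful than the paper, which never explicitly justifies the nonnegativity of the eigenvector (needed both for Motzkin--Straus and for $U \neq 0$) nor the case $\alpha = 1$, where Proposition~\ref{prop::Perron_alpha} does not apply.
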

\begin{proof}
    Let $y = (y_1, y_2, \ldots, y_n)$ be the normalized eigenvector corresponding to $\lambda_1(A_\alpha(G)).$ Let $\displaystyle U = \sum_{i = 1}^ny_i$ and $x = \dfrac{1}{U}\cdot y.$ Then,
    \begin{equation*}
        \langle x, A_\alpha(G)x \rangle = \langle x, (\alpha D(G) + (1-\alpha)A(G))x \rangle = \alpha \langle x, D(G)x \rangle + (1-\alpha)\langle x, A(G)x \rangle.\\
    \end{equation*}
    
    From Theorem~\ref{theo::motzkin} we have that
    \begin{equation*}
         \langle x, A_\alpha(G)x \rangle \leq \alpha \Delta \langle x, x \rangle + (1-\alpha) \left( 1 - \dfrac{1}{\omega} \right) = \dfrac{\alpha \Delta}{U^2} + (1-\alpha)\left(1 - \dfrac{1}{\omega} \right).
    \end{equation*}

    On the other hand,
    \begin{align*}
        \langle x, A_\alpha(G)x \rangle &= \displaystyle (2\alpha -1)\sum_{i=1}^n x_i^2 d_i + (1-\alpha)\sum_{v_i \sim v_j}(x_i + x_j)^2\\
        & = \dfrac{1}{U^2}\left(\displaystyle (2\alpha -1)\sum_{i=1}^n y_i^2 d_i + (1-\alpha)\sum_{v_i \sim v_j}(y_i + y_j)^2 \right)\\
        & = \dfrac{\lambda_1(A_\alpha(G))}{U^2}.
     \end{align*}

     Therefore
     \begin{equation*}
         \dfrac{\lambda_1(A_\alpha(G))}{U^2} \leq \dfrac{\alpha \Delta}{U^2} + (1-\alpha)\left(1 - \dfrac{1}{\omega} \right)
     \end{equation*}
    and so,
    \begin{equation*}
        \lambda_1(A_\alpha(G)) \leq \alpha\Delta + U^2(1-\alpha)\dfrac{\omega - 1}{\omega}.
    \end{equation*}

     Since $U^2 = \displaystyle \left( \sum_{i = 1}^n y_i \right)^2 \leq n\sum_{i=1}^n y_i^2 = n,$ we obtain
     \begin{equation*}
         \lambda_1(A_\alpha(G)) \leq \dfrac{\alpha \Delta \omega + (1-\alpha)(\omega - 1)n}{\omega}
     \end{equation*}
     and the result follows.
\end{proof}

Theorem \ref{theo::cioaba} introduces a lower bound for $\lambda_n(A_\alpha(G))$ based on a bound obtained by Alon and Sudakov, in \cite{alon2000}, for the smallest eigenvalue of the adjacency matrix of $G$.

\begin{theorem}\label{theo::cioaba}
    Let $G$ be a graph on $n$ vertices with diameter $diam(G)$, maximum degree $\Delta$ and $\alpha \in [0,1]$. If G is non-bipartite then
    \begin{equation}
        \lambda_n(A_\alpha(G)) \geq -\Delta + \dfrac{1+\alpha}{n(diam(G) + 1)}
    \end{equation}
\end{theorem}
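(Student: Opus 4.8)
The plan is to reduce the estimate to the adjacency matrix, for which the quoted Alon--Sudakov bound is available, and then to recover the factor $1+\alpha$ by a short algebraic comparison. First I would write $A_\alpha(G) = \alpha D(G) + (1-\alpha)A(G)$ as a sum of two symmetric matrices and apply Weyl's inequality for the smallest eigenvalue, $\lambda_n(X+Y) \ge \lambda_n(X) + \lambda_n(Y)$. Since $\alpha \ge 0$ and $1-\alpha \ge 0$, and the diagonal matrix $D(G)$ has smallest eigenvalue $\delta$, this yields
\[
\lambda_n(A_\alpha(G)) \ge \alpha\,\lambda_n(D(G)) + (1-\alpha)\,\lambda_n(A(G)) = \alpha\delta + (1-\alpha)\,\lambda_n(A(G)).
\]

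Next I would invoke the Alon--Sudakov bound for a connected non-bipartite graph, $\lambda_n(A(G)) \ge -\Delta + \tfrac{1}{n(diam(G)+1)}$, and substitute it above. Writing $\beta = \tfrac{1}{n(diam(G)+1)}$ for brevity, this gives $\lambda_n(A_\alpha(G)) \ge \alpha\delta - (1-\alpha)\Delta + (1-\alpha)\beta$. Comparing the right-hand side with the target $-\Delta + (1+\alpha)\beta$, the difference collapses to $\alpha\bigl(\delta + \Delta - 2\beta\bigr)$, so it remains only to verify that this is nonnegative. That last step is routine: connectedness gives $\delta \ge 1$, non-bipartiteness (an odd cycle) gives $\Delta \ge 2$, while $n(diam(G)+1) \ge 1$ forces $2\beta \le 2 \le \delta + \Delta$. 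Hence $\alpha(\delta + \Delta - 2\beta) \ge 0$ for every $\alpha \in [0,1]$, and the claimed inequality follows with no case distinction on $\alpha$.

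I expect the only genuine content to be the Alon--Sudakov inequality itself, which is cited; the reduction via Weyl's inequality and the closing comparison are elementary. It is worth noting that the statement is vacuously easy for $\alpha \ge \tfrac12$, since then $A_\alpha(G) = (2\alpha-1)D(G) + (1-\alpha)Q(G)$ is positive semidefinite, so $\lambda_n(A_\alpha(G)) \ge 0$, while the right-hand side is at most $-\Delta + 2\beta < 0$; the real work is confined to $\alpha < \tfrac12$, where the Weyl route handles everything uniformly anyway.

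The main obstacle, properly speaking, is hidden inside the cited bound. If one instead wanted a self-contained argument in the style of the earlier theorems (working directly from the eigenvector), one would take the unit eigenvector $f$ for $\lambda_n(A_\alpha(G))$, use the identity $f^T A_\alpha(G) f = (2\alpha-1)\sum_i d_i f_i^2 + (1-\alpha)\sum_{v_i\sim v_j}(f_i+f_j)^2$, bound $\sum_i d_i f_i^2 \le \Delta$, and then lower bound $\sum_{v_i\sim v_j}(f_i+f_j)^2$: locate the vertex $u$ with $|f_u| = \max_i|f_i| \ge 1/\sqrt n$, observe that a connected non-bipartite graph admits an odd closed walk through $u$ of length at most $2\,diam(G)+1$ (an edge inside some BFS layer from $u$ produces one), and apply the Cauchy--Schwarz inequality along that walk. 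Controlling the length of this odd closed walk is the delicate point of that alternative route, and it is precisely what the Weyl reduction lets us sidestep.
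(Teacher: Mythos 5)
Your proof is correct, but it takes a genuinely different route from the paper's. The paper does not invoke the Alon--Sudakov bound as a black box; it redoes that argument natively for $A_\alpha$: taking a unit eigenvector $x$ for $\lambda_n(A_\alpha(G))$, it establishes
\[
\Delta + \lambda_n(A_\alpha(G)) \geq \sum_{v_i \sim v_j}(x_i+x_j)^2 + \alpha\sum_{v_i \sim v_j}(x_i-x_j)^2,
\]
uses non-bipartiteness to produce an edge whose endpoints carry entries of the same sign, and runs a Cauchy--Schwarz estimate with alternating signs along a path of length at most $diam(G)+1$ to extract $(1+\alpha)x_1^2/(diam(G)+1) \geq (1+\alpha)/\bigl(n(diam(G)+1)\bigr)$; the factor $1+\alpha$ emerges directly from the two sums. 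You instead lift the known adjacency-matrix result via superadditivity of the smallest eigenvalue (Weyl), $\lambda_n(\alpha D + (1-\alpha)A) \geq \alpha\delta + (1-\alpha)\lambda_n(A)$, and close with the comparison $\alpha(\delta+\Delta-2\beta) \geq 0$, which holds since $\delta \geq 1$, $\Delta \geq 2$ (an odd cycle exists), and $\beta = \tfrac{1}{n(diam(G)+1)} \leq 1$. Your algebra checks out, and in fact your intermediate inequality
\[
\lambda_n(A_\alpha(G)) \geq \alpha\delta - (1-\alpha)\Delta + \frac{1-\alpha}{n(diam(G)+1)}
\]
is strictly stronger than the stated bound whenever $\alpha>0$, so the Weyl route both proves and slightly sharpens the theorem. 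What you give up is self-containment: your argument is exactly as strong as the cited Alon--Sudakov inequality (which is the $\alpha=0$ case of the statement), whereas the paper's eigenvector-and-path argument re-proves that case and extends the technique used in its other bounds (compare Theorems~\ref{prop::cioaba} and~\ref{theo::theorem2}). Your side remarks are also sound: $A_\alpha(G) \succeq 0$ for $\alpha \geq \tfrac12$ does make the bound vacuous in that range, and the self-contained alternative you sketch via an odd closed walk is essentially the paper's own proof.
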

\begin{proof}
    Let $G$ be a graph, $A_\alpha(G) = A_\alpha$ and $V(G) = \{ v_1, \ldots, v_n \}.$ Let $x = (x_1, \ldots, x_n)$ be an unit eigenvector corresponding to the smallest eigenvalue $\lambda_n(A_\alpha)$. Then,
    \begin{align*}
        \lambda_n(A_\alpha) & = x^TA_\alpha x = \alpha x^T D(G) x + (1-\alpha)x^T A(G) x\\
        %& = \alpha \displaystyle \sum_i d_i x_i^2 + (1-\alpha) \sum_{ij}a_{ij}x_ix_j\\
        & = \alpha \displaystyle \sum_{i=1}^{n} d_i x_i^2 + 2(1-\alpha) \sum_{v_i \sim v_j}x_ix_j.
    \end{align*}

    %The fact that the maximum degree of \( G \) is \( \Delta \), together with the inequality $\Delta = \Delta \vert x \vert^2 \geq \sum_{i} d_i x_i^2$ implies that
    As $\Delta = \Delta \vert x  \vert^2 \geq \displaystyle \sum_{i=1}^n d_i x_i^2,$ we have that
    \begin{align}\label{theo3.7}
        \Delta + \lambda_n(A_\alpha) & \geq \displaystyle \sum_{i=1}^nd_ix_i^2 + \alpha \sum_{i=1}^nd_ix_i^2 + 2(1-\alpha) \sum_{v_i \sim v_j}x_ix_j \nonumber\\
        & = \left( \displaystyle \sum_{i=1}^nd_ix_i^2 +  2\sum_{v_i \sim v_j}x_ix_j\right) + \alpha \left( \displaystyle \sum_{i=1}^nd_ix_i^2 - 2\sum_{v_i \sim v_j}x_ix_j\right) \nonumber\\
        & = \displaystyle \sum_{v_i \sim v_j}(x_i + x_j)^2 + \alpha \sum_{v_i \sim v_j}(x_i - x_j)^2.
    \end{align}

    The set $V(G)$ can be partitioned into two subsets, $V_1$ and $V_2$, such that if $v_s \in V_1$ if and only if $x_s$ is negative number and $V_2$ be the set of all remaining vertices. As $G$ is non-bipartite there exists an edge $v_sv_t$ of $G$ such that either both coordinates $x_s$ and $x_t$ are non-negative or both are negative.
    
    Without loss of generality, we can suppose that \( x_1 \) is non-negative and has maximum value among all entries of \( x \). First, consider the case that \( v_sv_t \in E \) and \( x_s \geq 0, x_t \geq 0 \). Now, we can assume that \( d(v_1, v_k) + d(v_s,v_t) \leq diam(G) + 1 \) such that \( k \in \{s, t\} \). Let \( v_1 = w_1, w_2, \ldots, w_k = v_k \) be this such path. From inequality (\ref{theo3.7})  and the Cauchy-Schwarz inequality, we have

   \begin{align*}
    \Delta + \lambda_n(A_\alpha) & \geq \sum_{v_i \sim v_j}(x_i + x_j)^2 + \alpha \sum_{v_i \sim v_j}(x_i - x_j)^2 \\
    & \geq \sum_{i=1}^{k-1}(x_{w_i} + x_{w_{i+1}})^2 + \alpha \sum_{i=1}^{k-1}(x_{w_{i}} - x_{w_{i+1}})^2 \\
    & \geq \dfrac{1}{k-1} \left[ \left(\sum_{i=1}^{k-1}\vert x_{w_{i}} + x_{w_{i+1}} \vert \right)^2 + \alpha\left( \sum_{i=1}^{k-1}(x_{w_{i}} - x_{w_{i+1}})\right)^2 \right] \\
    & \geq \dfrac{1}{k-1} \left[ \left((x_{w_{1}} + x_{w_{2}}) + (-x_{w_{2}} - x_{w_{3}}) + (x_{w_{3}} + x_{w_{4}}) + (-x_{w_{4}} - x_{w_{5}})\right.\right. \\
    & \left. \left. + \cdots + (x_{k-1} + x_k)\right)^2 + \alpha \left((x_{w_{1}} - x_{w_{2}}) + (x_{w_{2}} - x_{w_{3}}) + (x_{w_{3}} - x_{w_{4}}) \right.\right.\\
    & \left.\left. + (x_{w_{4}} - x_{w_{5}}) + \cdots + (x_{w_{k-1}} - x_{w_{k}})\right)^2 \right]\\
    & = \dfrac{1}{k-1} \left[(x_{w_{1}} + x_{w_{k}})^2 + \alpha(x_{w_{1}} - x_{w_{k}})^2 \right]\\
    & \geq \dfrac{1}{diam(G) + 1}\left( x_1^2 + \alpha x_1^2 \right) = \dfrac{1}{diam(G) + 1}(1+\alpha)x_1^2.
\end{align*}

Finally, since $\displaystyle \sum_{i = 1}^nx_i^2  = 1$ and $x_1$ has maximum value, we conclude that
$$\Delta + \lambda_n(A_\alpha) \geq \dfrac{x_1^2(1+\alpha)}{diam(G) + 1} \geq \dfrac{ \displaystyle \sum_{i=1}^nx_i^2}{n(diam(G) + 1)}(1+\alpha) = \dfrac{1+\alpha}{n(diam(G) + 1)}.$$

Now consider the case that both coordinates \( x_s, x_t \), such that \(v_sv_t \in E \), are negative. Using the reasoning above it follows that there exists a path from \( v_1 \) to \(v_k \), $k \in \{s,t\}$ such that \( x_k < 0 \) and \( d(v_1, v_k) + d(v_s,v_t) \leq diam(G) + 1 \). Thus in a similar way, we get the result.
\end{proof}

As consequence of Theorem \ref{theo::cioaba} and Lemma \ref{lemma::eigeneq_RegularGraphs}, we obtain following result for $\Delta-$regular graphs.

\begin{corollary}
    Let $G$ be a $\Delta-$regular graph on $n$ vertices. If $G$ is non-bipartite graph then
    $$\lambda_1(A_\alpha(G)) + \lambda_n(A_\alpha(G)) \geq \dfrac{1+\alpha}{n(diam(G) + 1)}.$$
\end{corollary}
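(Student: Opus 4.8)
The plan is to combine the two quoted results directly, since regularity collapses the first summand to a constant. First I would invoke Lemma~\ref{lemma::eigeneq_RegularGraphs}: because $G$ is $\Delta$-regular, the lemma gives $\lambda_1(A_\alpha(G)) = \Delta$ for every $\alpha \in [0,1]$, regardless of the underlying adjacency spectrum. This fixes the value of the largest $A_\alpha$-eigenvalue exactly, with no estimation needed.

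Next, I would check that the hypotheses of Theorem~\ref{theo::cioaba} hold for our $G$. Since $G$ is $\Delta$-regular, its maximum degree equals $\Delta$; by assumption $G$ is non-bipartite on $n$ vertices; and $\alpha \in [0,1]$. These are exactly the hypotheses required, so the theorem applies and yields
\[
\lambda_n(A_\alpha(G)) \geq -\Delta + \dfrac{1+\alpha}{n(diam(G) + 1)}.
\]

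Finally, I would add the two relations. Substituting $\lambda_1(A_\alpha(G)) = \Delta$ into the sum and using the lower bound for $\lambda_n(A_\alpha(G))$, the terms $\Delta$ and $-\Delta$ cancel, leaving
\[
\lambda_1(A_\alpha(G)) + \lambda_n(A_\alpha(G)) \geq \Delta - \Delta + \dfrac{1+\alpha}{n(diam(G) + 1)} = \dfrac{1+\alpha}{n(diam(G) + 1)},
\]
which is the desired inequality.

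Since the statement is labelled a corollary of Theorem~\ref{theo::cioaba} and Lemma~\ref{lemma::eigeneq_RegularGraphs}, I do not expect any genuine obstacle: the only point demanding a moment of care is verifying that the maximum-degree and non-bipartiteness hypotheses of Theorem~\ref{theo::cioaba} are indeed satisfied by a $\Delta$-regular non-bipartite graph, which is immediate. The entire argument is a two-line combination once the exact evaluation $\lambda_1(A_\alpha(G)) = \Delta$ is in hand.
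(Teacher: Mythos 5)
Your proof is correct and follows exactly the route the paper intends: the corollary is stated as a consequence of Theorem~\ref{theo::cioaba} and Lemma~\ref{lemma::eigeneq_RegularGraphs}, and your combination (exact evaluation $\lambda_1(A_\alpha(G)) = \Delta$ from regularity, plus the lower bound on $\lambda_n(A_\alpha(G))$, with the $\Delta$ terms cancelling) is precisely that argument.
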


Theorem~\ref{theo::alpha-liu2008-smallest} introduces an upper and a lower bound for $\lambda_n(A_\alpha(G))$ based on the bound for $\lambda_n(Q(G))$, presented by Liu in \cite{Liu2008}. 

\begin{theorem}\label{theo::alpha-liu2008-smallest}
    Let $G$ be a graph with $n$ vertices, $m$ edges, $\delta$ and $\omega$ its minimum degree and clique number respectively and $\alpha \in [0,1]$. Then,
    \begin{equation}
        \alpha \delta - \sqrt{2m(1-\alpha)\left(1-\dfrac{1}{\omega}\right)} \leq \lambda_n(A_\alpha(G)) \leq \alpha \delta + \sqrt{2m(1-\alpha)\left(1-\dfrac{1}{\omega}\right)}
    \end{equation}
\end{theorem}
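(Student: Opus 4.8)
The plan is to treat the two inequalities separately, since they have very different depth: the lower bound carries essentially all of the content, while the upper bound is almost immediate. In both cases I would start from Rayleigh's principle (Proposition~\ref{prop:rayleigh_alpha}) and the decomposition $A_\alpha(G) = \alpha D(G) + (1-\alpha)A(G)$, writing, for a suitable unit vector $x$,
\[
\lambda_n(A_\alpha(G)) = x^{T}A_\alpha(G)x = \alpha\sum_{i=1}^n d_i x_i^2 + (1-\alpha)\,x^{T}A(G)x .
\]

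The crucial ingredient is a clique-based estimate of the adjacency form valid for \emph{every} unit vector $x$, namely $\bigl|x^{T}A(G)x\bigr|\le\sqrt{2m(1-1/\omega)}$. To obtain it I would introduce $z=(x_1^2,\dots,x_n^2)$, which is nonnegative and satisfies $\sum_i z_i = x^{T}x = 1$, so that Theorem~\ref{theo::motzkin} applies to $z$ and gives $2\sum_{v_i\sim v_j}x_i^2x_j^2 = z^{T}A(G)z \le 1-\tfrac{1}{\omega}$. Inserting this into the Cauchy--Schwarz bound $\bigl(\sum_{v_i\sim v_j}x_ix_j\bigr)^2\le m\sum_{v_i\sim v_j}x_i^2x_j^2$ yields $\bigl|x^{T}A(G)x\bigr| = 2\bigl|\sum_{v_i\sim v_j}x_ix_j\bigr|\le\sqrt{2m(1-1/\omega)}$, which is Nikiforov's spectral-radius/clique inequality in disguise.

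For the lower bound I would take $x$ to be a unit eigenvector for $\lambda_n(A_\alpha(G))$. Since $d_i\ge\delta$ and $\sum_i x_i^2 = 1$, one has $\sum_i d_i x_i^2\ge\delta$, so the displayed identity together with the estimate above gives $\lambda_n(A_\alpha(G))\ge\alpha\delta+(1-\alpha)\,x^{T}A(G)x\ge\alpha\delta-(1-\alpha)\sqrt{2m(1-1/\omega)}$. Because $0\le 1-\alpha\le 1$ forces $(1-\alpha)^2\le 1-\alpha$, this already implies $\lambda_n(A_\alpha(G))\ge\alpha\delta-\sqrt{2m(1-\alpha)(1-1/\omega)}$, in fact with some slack.

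For the upper bound I would \emph{not} use the eigenvector: plugging it in only bounds $\alpha\sum_i d_i x_i^2$ by $\alpha\Delta$, giving the wrong centre, and reconciling this mismatch is the one genuine obstacle in a naive attempt. Instead I would test Rayleigh's quotient against the standard basis vector $e_k$ at a vertex of minimum degree, for which $e_k^{T}A_\alpha(G)e_k = \alpha d_k = \alpha\delta$; hence $\lambda_n(A_\alpha(G))\le\alpha\delta$, and since the radical is nonnegative the stated bound $\lambda_n(A_\alpha(G))\le\alpha\delta+\sqrt{2m(1-\alpha)(1-1/\omega)}$ follows at once. Thus the only nontrivial step is the Motzkin--Straus/Cauchy--Schwarz estimate, and the upper inequality is a comfortable consequence of the trivial bound $\lambda_n(A_\alpha(G))\le\alpha\delta$.
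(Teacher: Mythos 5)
Your proposal is correct and follows essentially the same route as the paper: both rest on the same key estimate, obtained by applying Motzkin--Straus (Theorem~\ref{theo::motzkin}) to the vector of squared eigenvector entries and Cauchy--Schwarz over the $m$ edges, combined with the bound $\lambda_n(A_\alpha(G)) \le \alpha\delta$ and the slack step $(1-\alpha)^2 \le 1-\alpha$ to absorb the factor into the radical. The only differences are organizational: the paper cites Nikiforov for $\lambda_n(A_\alpha(G)) \le \alpha\delta$ and extracts both inequalities at once by squaring $\lambda_n(A_\alpha(G)) - \alpha\delta$ (using its sign), whereas you prove that bound self-containedly with the test vector $e_k$ at a minimum-degree vertex and handle the two inequalities separately, which is a slightly cleaner but not fundamentally different argument.
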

\begin{proof}
    Let $y = (y_1, \ldots, y_n)$ be the normalized eigenvector corresponding to $\lambda_n(A_\alpha(G))$. Then
    \begin{align} \label{ineq::ineq1_theo3.8}
        \lambda_n(A_\alpha(G)) &= y^TA_\alpha(G)y = y^T(\alpha D(G) + (1-\alpha)A(G))y \nonumber\\
        & = \alpha y^TD(G)y + (1-\alpha)y^TA(G)y\nonumber \\
        & = \displaystyle \alpha \sum_{i = 1}^n y_i^2 d_i + (1-\alpha)\sum_{v_i \sim v_j}2y_iy_j \nonumber\\
        & \displaystyle \geq \alpha \delta + (1-\alpha)\sum_{v_i \sim v_j} 2y_iy_j
    \end{align}
    Nikiforov proved in \cite{VN17} that $\lambda_n(A_\alpha(G)) \leq \alpha \delta$ and consequently $\lambda_n(A_\alpha(G)) - \alpha \delta \leq 0.$ So, from inequality \eqref{ineq::ineq1_theo3.8} follows that $(\lambda_n(A_\alpha(G)) - \alpha\delta)^2 \leq \displaystyle \left((1-\alpha)\sum_{v_i \sim v_j}2y_iy_j\right)^2.$

    By the Cauchy inequality we obtain
    \begin{equation}
        \displaystyle \left(\sum_{v_i \sim v_j}2y_iy_j\right)^2 \leq 4m \sum_{v_i \sim v_j}(y_iy_j)^2 = 2m\left(2\sum_{v_i \sim v_j}y_i^2y_j^2 \right)
    \end{equation}

    Since $(y_1^2, \ldots, y_n^2)$ is a non-negative vector and $\displaystyle \sum_{i=1}^ny_i^2 = 1$, from Theorem~\ref{theo::motzkin} we have
    $$\displaystyle 2\sum_{v_i \sim v_j}y_i^2y_j^2 \leq 1 - \dfrac{1}{\omega}.$$

    Therefore
    $$\dfrac{(\lambda_n(A_\alpha(G)) - \alpha \delta)^2}{2m} \leq (1-\alpha)\left(1 - \dfrac{1}{\omega}\right).$$
    So,
    $$\alpha \delta - \sqrt{2m(1-\alpha)\left(1-\dfrac{1}{\omega}\right)} \leq \lambda_n(A_\alpha(G)) \leq \alpha \delta + \sqrt{2m(1-\alpha)\left(1-\dfrac{1}{\omega}\right)}.$$
\end{proof}

\section*{Acknowledgments}

\vspace{0.2cm}

\noindent \textbf{Funding information}: The authors thank FAPERJ (E-26/210.109/2023(284573)) for partially funding this research.

\bibliographystyle{unsrt}  
\bibliography{references}  %%% Remove comment to use the external .bib file (using bibtex).
%%% and comment out the ``thebibliography'' section.

\begin{comment}
%%% Comment out this section when you \bibliography{references} is enabled.

\end{comment}

\end{document}